\newtheorem{corollary}{Corollary} 
\newtheorem*{theorem*}{Theorem} 
\newtheorem{lemma}{Lemma} 
\newtheorem{proposition}{Proposition} 
\newtheorem{remark}{Remark}
\newtheorem{theorem}{Theorem} 
\newtheorem{conj}{Conjecture}
\numberwithin{equation}{section}
\newcommand{\R}{\mathbb R}
\newcommand{\s}{\mathbb S}
\newcommand{\interno}[2]{\left\langle #1 ,#2 \right\rangle}
\newcommand{\divergence}{\text{div}}
\begin{document}

\title[Uniqueness of free-boundary minimal hypersurfaces]{Uniqueness of free-boundary minimal hypersurfaces in rotational domains}


\author[Barbosa]{Ezequiel Barbosa}
\address{Instituto de Ci\^{e}ncias Exatas-Universidade Federal de Minas Gerais\\ 30161-970-Belo Horizonte-MG-BR} \email{ezequiel@mat.ufmg.br} 

\author[Freitas]{Allan Freitas} \address{Departamento de Matem\'{a}tica-Universidade Federal da Para\'{i}ba\\ 58059-900  Jo\~{a}o Pessoa, Para\'{i}ba, Brazil}
\email{allan@mat.ufpb.br}

\author[Melo]{Rodrigo Melo} \address{Universidade Federal de Alagoas,\\ Campus do Sert\~{a}o, Rodovia AL 145, Prefeito Jos\'e Serpa de Menezes S/N, Cidade Universit\'aria, 57.480-000.
Delmiro Gouveia - AL - Brazil.}
\email{rodrigo@pos.mat.ufal.br}

\author[Vit\'orio]{Feliciano Vit\'{o}rio} \address{Instituto de Matem\'{a}tica, Universidade Federal de Alagoas,\\ Campus A. C. Sim\~{o}es, BR 104 - Norte, Km 97, 57072-970.
Macei\'o - AL -Brazil.}
\email{feliciano@pos.mat.ufal.br}


\keywords{Free-Boundary, Minimal Surfaces.} \subjclass[2010]{Primary 53C21, 53C42; Secondary 53C20, 53C65}

\date{}

\dedicatory{}

\begin{abstract}
In this work, we investigate the existence of compact free-boundary minimal hypersurfaces immersed in several domains. Using an original integral identity for compact free-boundary minimal hypersurfaces that are immersed in a domain whose boundary is a regular level set, we study the case where this domain is a quadric  or, more generally, a rotational domain. This existence study is done without topological restrictions. We also obtain a new gap theorem for free boundary hypersurfaces immersed in an Euclidean ball and in a rotational ellipsoid.
\end{abstract}

\maketitle

Minimal surfaces have been one of the most studied objects in differential geometry. The problem of showing the existence of a area-minimizing surface whose boundary is a given  Jordan curve was introduced by Plateau in 18 century and it was completely solved in 1930 decade, independently by Douglas \cite{douglas1931} and Rad\'o \cite{rado1930}. These two works inspired Courant \cite{courant1940} in 1940 to investigate the existence of area-minimizing surfaces immersed in a domain $U\subset\R^3$ whose boundaries are lying on $\partial U$. This type of problem have been known as {\it free boundary problem}.

In the  decades after Courant's work, several contributions have appeared: Nitsche \cite{nitsche1985} was the first to study minimal surfaces with free boundary in the unitary ball $\mathbb{B}^3\subset\R^3$;  Gr\"uter and Jost \cite{gruter1986} and Struwe \cite{struwe1984} obtained several existence results for free boundary minimal disks on convex domains; a general existence theory of free boundary minimal disks was later developed by Fraser \cite{fraser2000}.

In the particular case where the domain is the unitary ball $\mathbb{B}^3$, the simplest examples of minimal free boundary surfaces are the equatorial disk $\mathbb{D}$ and the critical catenoid $\mathbb{K}$.  

After the work of Nitsche in 1985 there was no development in the theory for the specific case of free boundary  minimal surfaces on  $\mathbb{B}^3\subset\R^3$. Only in 2010's decade, with the works of Fraser and Schoen (\cite{fraser2011}, \cite{fraser2016}) that the topic started to be explored again. They have found a connection between free boundary minimal surfaces on the unitary ball $\mathbb{B}^n\subset\R^n$ and Steklov eigenvalues.




In his work at 1985, Nitsche  claimed without proof that the critical catenoid is the only free boundary minimal annulus in $\mathbb{B}^3$.  The conjecture was restated again in 2014 by Fraser and Li:

\begin{conj}(Fraser-Li \cite{fraser2014}) \label{conj}
Up to congruence, the critical catenoid is the only properly embedded free boundary minimal annulus in $\mathbb{B}^3$.
\end{conj}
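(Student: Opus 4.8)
The plan is to study a properly embedded free-boundary minimal annulus $\Sigma \subset \mathbb{B}^3$ through its conformal structure together with Steklov/balancing identities, with the ultimate goal of forcing rotational symmetry. First I would record the two structural facts that every such $\Sigma$ satisfies: since $\Sigma$ is minimal, the position vector $x$ is a harmonic map, so the ambient coordinate functions $x_1,x_2,x_3$ are harmonic on $\Sigma$; and since the meeting with $\partial\mathbb{B}^3 = \mathbb{S}^2$ is orthogonal with $|x|=1$ on $\partial\Sigma$, the outward conormal satisfies $\partial x/\partial\eta = x$ there. Thus each $x_i$ is a Steklov eigenfunction with eigenvalue $1$, and $\Delta_\Sigma |x|^2/2 = 2$ integrates (using $\langle x,\eta\rangle = 1$ on $\partial\Sigma$) to the balancing relation $2|\Sigma| = |\partial\Sigma|$, a first instance of the integral-identity philosophy of this paper.

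Next I would pass to a conformal model. An annulus is conformally a flat cylinder $C_T = [0,T]\times\mathbb{S}^1$, and the minimal immersion is then a conformal harmonic map described by Weierstrass data $(g,\omega)$ with $g$ the meromorphic stereographic Gauss map and $\omega$ a holomorphic one-form. The critical catenoid is the model solution, with Gauss map $g(z)=z$ at a specific modulus $T=T^{*}$, so the scheme is to prove that the boundary data pin the immersion down to exactly this. The free-boundary conditions — orthogonal meeting of the two circles $\{t=0\}$ and $\{t=T\}$ with $\mathbb{S}^2$, together with the reality and period conditions that make the Weierstrass integral well defined on the annulus — translate into constraints forcing $g$ to send each boundary circle to a prescribed curve on the Riemann sphere and forcing the real periods of $\omega$ to vanish.

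The heart of the argument, and what I expect to be the main obstacle, is an a priori symmetry or rigidity step: one must show that these boundary and period constraints admit no solution other than the rotationally symmetric one. My plan here would combine two ingredients. On the analytic side, I would use that $x_1,x_2,x_3$ lie in the first Steklov eigenspace and invoke a Courant-type nodal-domain analysis: the nodal set of a linear combination $\langle x,a\rangle$ is $\Sigma \cap \{\langle \cdot,a\rangle = 0\}$, the intersection of $\Sigma$ with a plane through the origin, and controlling the number and configuration of nodal domains on an annulus severely restricts the immersion. On the geometric side, I would feed in the balancing identity and its higher analogues (Minkowski- and Reilly-type identities of the kind developed in this paper) to eliminate the non-symmetric Fourier modes of the conformal data in the $\mathbb{S}^1$ variable. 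If every mode except the symmetric one is killed, the Gauss map reduces to $g(z)=z$ and $\Sigma$ is a surface of revolution.

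Once rotational symmetry is established the classification is classical: the only complete minimal surfaces of revolution are the plane and the catenoid, and among catenoids the condition $\partial x/\partial\eta = x$ selects a unique neck radius, namely the critical catenoid $\mathbb{K}$, up to the congruences of $\mathbb{B}^3$; the annulus topology excludes the equatorial disk $\mathbb{D}$. The genuine difficulty, and the reason the conjecture remains open, is precisely this rigidity step: the integral identities yield necessary balancing conditions but not, by themselves, the full suppression of the non-symmetric modes, and there is no known a priori reason that an embedded free-boundary minimal annulus must inherit the rotational symmetry of the ambient ball.
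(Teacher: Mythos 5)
The statement you are asked to prove is an open conjecture; the paper does not prove it and says so explicitly, offering only partial results under additional hypotheses (Theorem~\ref{pr2}, which assumes that $x^T$ is a principal direction of the Weingarten operator on a collar of a boundary component, and the cited results of Fraser--Schoen, McGrath, Kusner--McGrath and Kapouleas--Li, each of which also requires an extra assumption). Your proposal correctly assembles the standard preliminary facts --- the coordinate functions are Steklov eigenfunctions with eigenvalue $1$, the balancing identity $2|\Sigma|=|\partial\Sigma|$ (which is exactly identity (\ref{fbident}) with $n=3$), and the conformal/Weierstrass description on a flat cylinder --- but the argument stops precisely where the conjecture does: nothing in the proposal actually eliminates the non-rotationally-symmetric conformal data. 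The sentence ``if every mode except the symmetric one is killed'' is a conditional whose hypothesis is never established, and your closing paragraph concedes this. So what you have is a reasonable research plan, not a proof.

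Concretely, the missing step cannot be filled by the tools you name. Minkowski- and Reilly-type identities such as Proposition~\ref{intid} are integrated (zeroth Fourier mode) statements: they constrain averages of the geometry and are automatically satisfied by \emph{every} free boundary minimal annulus, so they cannot distinguish a hypothetical non-symmetric competitor from the critical catenoid. Likewise, a Courant nodal-domain analysis of the functions $\langle x,a\rangle$ only bites if one first knows that $1$ is the first nonzero Steklov eigenvalue of $\Sigma$; that is the Fraser--Schoen hypothesis $\sigma_1=1$, which is unverified for a general free boundary minimal annulus and is essentially equivalent in difficulty to the conjecture itself. The paper's own partial result shows what a workable substitute looks like: the extra hypothesis of Proposition~\ref{pr1} forces the eigenvalue associated to $x^T$ to be constant along a boundary component, hence (by minimality and the identity $H=(\tau+\lambda)/2$) the boundary curve has constant geodesic curvature in $\mathbb{S}^2$ and is a round circle, after which the Bj\"orling-type uniqueness of Kapouleas--Li applies. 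Without some such additional input, your scheme has no mechanism to start the symmetrization, and the classification-of-surfaces-of-revolution endgame is never reached.
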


Besides the conjecture still being unanswered up today, several important characterization results have come in the past years. In 2016 Fraser and Shoen \cite{fraser2016} have proved the conjecture under the hypothesis that the first Steklov eigenvalue $\sigma_1$ is equal to 1. In 2018, McGrath \cite{mcgrath2018} showed that $\sigma_1=1$ if the annulus is symmetric with respect to coordinate planes. Recently, Kusner and Mcgrath \cite{kusner2020} have weakened the symmetric hypothesis asking that the annulus has been symmetric with respect to the origin. Another characterization, without topological restrictions, uses Bj\"orling unicity Theorem to assure that if one boundary component $\gamma$ of a minimal surface with free boundary in $\mathbb{B}^3$ is rotationally invariant, then the surface is either the equatorial disk $\mathbb{D}$ or the critical catenoid $\mathbb{K}$ (see corollary 3.9 of \cite{kapouleas2017} by Kapouleas and Li).

In addition to this lot of interest in studying free-boundary submanifolds in the unit ball, and aiming to expand this theory in new directions, many works have been developed to approach the case  where the free-boundary minimal hypersurface is embedded in other domains. For example, the free-boundary problem already was studied where the boundary of domain is a wedge (\cite{lopez}), a slab (\cite{souam}), a convex cone (\cite{choe}), a cylinder (\cite{lopez-pyo}) and many others. More generally, Wheeler \cite{wh} by studying the mean curvature flow of embedded disks with free
boundary in embedded cylinder or generalized cone of revolution, proved the existence of minimal free boundary disks in such domains without requiring topological obstructions. In this scope, we also cite the relevant work \cite{maximo}  by M\'aximo, Nunes and Smith, where the authors proved the existence of free boundary minimal annuli inside convex subsets in three-dimensional Riemannian manifolds of nonnegative Ricci curvature.

In this work, we deal with domains $\Omega\subset\mathbb{R}^{n}$ whose boundary is a regular level set of a function $F$, i.e., $\partial\Omega=F^{-1}(1)$. For such domains we obtain a relevant Minkowski-type identity for compact free-boundary minimal hypersurfaces contained in it (see Proposition \ref{intid}). We use this identity to study the particular case where the function $F$ is a quadratic polynomial,
\begin{equation}
\label{quadraticpoly}
F(x_1, x_2, ..., x_n)=\sum_{i=1}^na_ix_i^2+bx_n+c,
\end{equation}
with $a_i \in \left\{-1,0,1\right\}$ and $b, c\in \R$, and therefore, the boundary of domain is a quadric hypersurface. We remark that this approach is used to study the existence of hypersurfaces that are inside or outside from $\Omega=F^{-1}(1)$. This permits unify the study of some remarkable domains such as cones, circular paraboloids, parabolic cylinders, slabs, hyperboloid of one sheet and many others. In particular, we obtain the following result:

\vspace{0.2cm}
\textbf{Theorem A}: 
Let $F$ be as in (\ref{quadraticpoly}) with $b=0$, $c\leq0$ and at least one of the coefficients (say $a_n$) being different from 1. There exists a minimal hypersurface $\Sigma$ with free boundary on $\partial \Omega$ if and only if $a_1=a_2=...=a_{n-1}=1$. Besides, 
\begin{enumerate}
	\item[(a)] $a_n=-1 \ \Rightarrow \ \Sigma$ is the flat disk supported at the origin.
	\item[(b)] $a_n=0 \ \Rightarrow \ \Sigma$ are the flat disks intersecting $\partial\Omega$ orthogonally.
\end{enumerate}   
\vspace{0.2cm}

Analysing the signal of the above coefficients $a_{i},b,c$, we obtain others nonexistence results for quadrics domains (see Theorems \ref{bneq0}, \ref{2sheets} and \ref{cilindcone}) and uniqueness results for rotational graphics domains (see Theorem \ref{rotationalgraphic}). 

A large interest in rigidity results for free-boundary submanifolds in several domains and with suitable hypotheses also has directed research interest. In this sense, many known results in closed submanifolds has inspired an approach in this scope. For example, the Conjecture \ref{conj} can be viewed as a free-boundary's version from the Lawson's conjecture about closed minimal surfaces in the sphere. In the direction to affirm this result, we obtain the following characterization, without topological assumptions:

\vspace{0.2cm}
\textbf{Theorem B}: Let $\Sigma\subset\mathbb{B}^{3}$ be a embedded minimal free boundary surface and let $\gamma\subset\partial\Sigma$ be a connected component of the boundary of $\Sigma$. Assume that exists one collar $\Gamma\subset\Sigma$ over $\gamma$ whose the set  $\left\{x^T, Ax^T\right\}$ is linearly dependent, where $A$ is the shape operator. Then, $\Sigma$ is either the equatorial disk $\mathbb{D}$ or the critical catenoid $\mathbb{K}$.

\vspace{0.2cm}

Also, the classical gap results for minimal submanifolds immersed in spheres by  Chern-do Carmo-Kobayashi have been evoked in order to establish similar results in the free-boundary's context, where, now, the ambient space is an Euclidean ball (see \cite{ambrozio2016}, \cite{celso}, \cite{feliciano}  and references therein). In this work, we also intend to approach gap results where the ambient space is a rotational ellipsoid or a ball. More specifically, we obtain the following results

\vspace{0.2cm}
\textbf{Theorem C}: Let $\Sigma^2$ be a compact minimal surface with free boundary in the ellipsoid 
$$\frac{x_1^2}{a^2}+\frac{x_2^2}{a^2}+\frac{x_3^2}{b^2}\leq 1$$
with $a\geq b$. If 
$$|A|^2\left[\interno{x}{N}+\left(\frac{a^2}{b^2}-1\right)\interno{x}{E_3}\interno{N}{E_3}\right]^2\leq2$$
on $\Sigma$, then $\Sigma$ is homeomorphic to a disk or an annulus.
\vspace{0.2cm}

\vspace{0.2cm}
\textbf{Theorem D}: Let $\Sigma^n$ be a free boundary minimal hypersurface in  the  unit ball $\mathbb{B}^{n+1}$. If $|A|^2\leq 2n$,  then $M$ is a totally geodesic equatorial disk $\mathbb{D}^n$.
\vspace{0.2cm}

This paper is organized as follows. In the first section, we approach some preliminaries for the underlying theme, obtaining a relevant integral identity for free boundary minimal hypersurfaces immersed in domains whose boundary is a regular value of a smooth function $F$. This permits obtain the results of the section 2, where we approach the particular case when $F$ is a quadratic polynomial and, thus, we have a quadric domain. In section 3, we continue investigating the applicability of the integral formula obtained in section 1, by studying rotational domains. In order to address the Conjecture \ref{conj}, we devote the section 4 to obtain a rigidity result for the critical catenoid immersed a 3-ball, with an additional condition. By the end, the Sectios 5 purposes to study some gap theorems (in the second fundamental form) for free-boundary minimal hypersufaces. In a first view, we extend a known gap result from Ambrozio-Nunes in a topological way, when the ambient is a rotational ellipsoid. And finally, we obtain a new gap result for free-boundary minimal hypersurfaces immersed in an Euclidean ball.

\section{Preliminaries}

Let $\Omega\subset \mathbb{R}^{n}$ be a domain with smooth boundary $\partial\Omega$ and denote by $\bar{N}$ the outward unit normal to $\partial\Omega$. We consider $\Sigma^{n-1}\hookrightarrow\Omega$ an hypersurface with boundary such that $\partial\Sigma\subset\partial\Omega$. We denote $N$ the outward unit normal to $\Sigma$ and $\nu$ the outward conormal along $\partial\Sigma$ in $\Sigma$. We remember that, in this scope, a hypersurface $\Sigma$ is called $\textit{Free-Boundary}$ if $\Sigma$
meets $\partial\Omega$ orthogonally. In others words, $\nu=\bar{N}$ along $\partial\Sigma$ or, equivalently, $\langle \bar{N}, N\rangle=0$ along $\partial\Sigma$. Since the classical Divergence Theorem, we can get the following integral identity to minimal free-boundary surfaces: 
\begin{equation}
\int_{\partial\Sigma}\langle x,\bar{N}\rangle\,ds=(n-1)|\Sigma|.\label{fbident}
\end{equation}
From now on, in the sequel, we obtain a new integral identity in the case where the domain has its boundary as a regular level set of a function $F:\mathbb{R}^{n}\longrightarrow\mathbb{R}$. More precisely, we have the following:

\begin{proposition}\label{intid} 
Let $\Omega$ be a domain in $\mathbb{R}^{n}$ such that $\partial\Omega=F^{-1}(1)$, 
where $F:\mathbb{R}^{n}\longrightarrow\mathbb{R}$ is a smooth map and $1$ 
is a regular value of $F$. Suppose that the normal vector field $\bar{N}$ of $\partial\Omega$ satisfies the condition
$$\bar{N}=\frac{\nabla F}{|\nabla F|}.$$
If $\Sigma$ is a free-boundary hypersurface contained in $\Omega$, with $\partial \Sigma\subset\partial\Omega$, and $\varphi\in C^{\infty}(\Sigma)$, 
then
\begin{equation}\label{fund}
\int_{\partial \Sigma}|\nabla F|\varphi\,ds=\int_{\Sigma}\varphi\Delta_{\Sigma}F\,d\Sigma+\int_{\Sigma}(1-F)\Delta_{\Sigma}\varphi\,d\Sigma,
\end{equation}
where $\Delta_{\Sigma}$ is the Laplace-Beltrami operator in $\Sigma$. 
In particular, with the above hypothesis,
$$\int_{\partial \Sigma}|\nabla F|\,ds=\int_{\Sigma}\Delta_{\Sigma}F\,d\Sigma.$$
\end{proposition}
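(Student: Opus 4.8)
The plan is to prove the identity (\ref{fund}) by integration by parts on $\Sigma$, exploiting two facts that the hypotheses hand us essentially for free: that $1-F$ vanishes identically on $\partial\Sigma$ (because $\partial\Sigma\subset\partial\Omega=F^{-1}(1)$), and that the conormal derivative of $F$ along $\partial\Sigma$ equals $|\nabla F|$ (because of the free-boundary condition together with the prescribed form of $\bar N$). Once these two observations are in place, the statement falls out of the divergence theorem applied to two suitably chosen tangential vector fields, and the final ``in particular'' formula is simply the case $\varphi\equiv 1$.

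First I would record the boundary computation, which is really the heart of the argument. Writing $\nabla_\Sigma F = \nabla F - \langle \nabla F, N\rangle N$ for the tangential part of the ambient gradient, and using that the outward conormal $\nu$ is tangent to $\Sigma$ (so that $\langle N,\nu\rangle = 0$), I get $\langle \nabla_\Sigma F,\nu\rangle = \langle \nabla F,\nu\rangle$ along $\partial\Sigma$. The free-boundary condition $\nu = \bar N$ combined with $\bar N = \nabla F/|\nabla F|$ then yields
$$\langle \nabla_\Sigma F,\nu\rangle = \left\langle \nabla F, \frac{\nabla F}{|\nabla F|}\right\rangle = |\nabla F| \qquad\text{on } \partial\Sigma.$$

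Next I would apply the divergence theorem on $\Sigma$ to the field $\varphi\,\nabla_\Sigma F$. The product rule gives $\divergence_\Sigma(\varphi\,\nabla_\Sigma F) = \langle\nabla_\Sigma\varphi,\nabla_\Sigma F\rangle + \varphi\,\Delta_\Sigma F$, while the resulting boundary term is $\int_{\partial\Sigma}\varphi\langle\nabla_\Sigma F,\nu\rangle\,ds = \int_{\partial\Sigma}\varphi|\nabla F|\,ds$ by the computation just made. Separately, applying the divergence theorem to $(1-F)\nabla_\Sigma\varphi$ produces a boundary term $\int_{\partial\Sigma}(1-F)\langle\nabla_\Sigma\varphi,\nu\rangle\,ds$ that vanishes since $F\equiv 1$ on $\partial\Sigma$, which gives $\int_\Sigma\langle\nabla_\Sigma F,\nabla_\Sigma\varphi\rangle\,d\Sigma = \int_\Sigma(1-F)\Delta_\Sigma\varphi\,d\Sigma$. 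Substituting this second relation into the first eliminates the gradient-pairing term and leaves exactly (\ref{fund}). Taking $\varphi\equiv 1$, so that $\Delta_\Sigma\varphi = 0$, then immediately produces the final displayed formula.

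The argument is essentially routine once the boundary identity $\langle\nabla_\Sigma F,\nu\rangle = |\nabla F|$ is secured, so the only real subtlety I expect is bookkeeping the sign and orientation conventions for the outward conormal $\nu$ versus $\bar N$, and confirming that the tangential/normal splitting of $\nabla F$ interacts correctly with $\nu$ being tangent to $\Sigma$. It is worth noting that minimality of $\Sigma$ is never used: the identity holds for an arbitrary free-boundary hypersurface, which is precisely what makes it a flexible tool in the later sections.
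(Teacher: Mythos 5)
Your proof is correct and follows essentially the same route as the paper: two applications of the divergence theorem on $\Sigma$ (to $\varphi\,\nabla_\Sigma F$ and to a field built from $\nabla_\Sigma\varphi$ with a factor vanishing on $\partial\Sigma$), with the boundary identity $\langle\nabla_\Sigma F,\nu\rangle=|\nabla F|$ supplying the boundary term and the cross-term $\int_\Sigma\langle\nabla_\Sigma F,\nabla_\Sigma\varphi\rangle\,d\Sigma$ cancelling between the two. The only cosmetic difference is that the paper integrates $F\nabla_\Sigma\varphi$ and subtracts, whereas you integrate $(1-F)\nabla_\Sigma\varphi$ directly; these are the same computation.
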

\begin{proof}
We note that, once $\Sigma$ is free boundary and $\partial\Omega$ is a level set of $F$, $\nabla F=|\nabla F|\nu$ over $\partial\Sigma$. Then, it follows from the divergence Theorem that 
{\small $$\int_{\partial\Sigma}\varphi|\nabla F|\,ds=\int_{\partial\Sigma}\varphi\interno{\nabla F}{\nu}\,ds=\int_\Sigma\divergence(\varphi \nabla F)\,d\Sigma=\int_\Sigma\varphi\Delta_{\Sigma} F\,d\Sigma+\int_\Sigma\interno{\nabla\varphi}{\nabla F}\,d\Sigma.$$} 
On other hand, the divergence Theorem also gives 
{\small $$\int_\Sigma\Delta_\Sigma\varphi\,d\Sigma=\int_{\partial\Sigma}F\interno{\nabla \varphi}{\nu}\,ds=\int_\Sigma\divergence(F\nabla \varphi)\,d\Sigma=\int_\Sigma F\Delta_{\Sigma}\varphi\,d\Sigma +\int_\Sigma\interno{\nabla\varphi}{\nabla F}\,d\Sigma.$$}
The equation (\ref{fund}) now follows taking the difference of the two identities above.
\end{proof}


Talking in a more general way, we observe that the signal analysis of partial derivatives on the underlying function $F$ permits obtain a first result of nonexistence and uniqueness for free-boundary minimal surfaces immersed in it.  

\begin{proposition} 
Under the same conditions of the previous proposition, we have
\begin{itemize}
\item[(i)]Assume there exists $v\in \mathbb{R}^{n}$ such that either $\frac{\partial F}{\partial v}>0$ or $\frac{\partial F}{\partial v}<0$. Then there is no immersed minimal hypersurface with free boundary in $\Omega$.
\item[(ii)]
Assume there exists $v$ such that either $\frac{\partial F}{\partial v}\leq0$ or $\frac{\partial F}{\partial v}\geq0$. Then, $\frac{\partial F}{\partial v}=0$ on $\partial \Sigma$ and consequently $\Sigma$ is totally geodesic: $\Sigma$ is in an intersection $\Omega \cap \Pi$, where $\Pi$ is a plane.
\end{itemize}
\end{proposition}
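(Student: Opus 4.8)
The plan is to reduce both statements to a single vector-valued boundary identity, namely that $\int_{\partial\Sigma}\nu\,ds=0$ for every compact minimal free-boundary $\Sigma$. To obtain it, I would exploit that minimality makes the restrictions of the Euclidean coordinate functions $x_1,\dots,x_n$ harmonic on $\Sigma$: since the mean curvature vector equals $\Delta_\Sigma \mathbf{x}$ and $\Sigma$ is minimal, $\Delta_\Sigma x_i=0$ for each $i$. Applying Green's identity on $\Sigma$ (exactly as in the derivation of (\ref{fbident}), but with a fixed vector $v$ in place of the position field) gives, for each $i$, $\int_{\partial\Sigma}\langle e_i,\nu\rangle\,ds=\int_\Sigma\Delta_\Sigma x_i\,d\Sigma=0$, and hence $\int_{\partial\Sigma}\langle v,\nu\rangle\,ds=0$ for every $v\in\mathbb{R}^n$. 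Equivalently, one can note that the tangential part $v^\top$ of a constant field $v$ is divergence-free on a minimal $\Sigma$ and apply the divergence theorem directly.

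Next I would bring in the free-boundary condition. Along $\partial\Sigma$ one has $\nu=\bar N=\nabla F/|\nabla F|$, so $\langle v,\nu\rangle=\frac{1}{|\nabla F|}\langle v,\nabla F\rangle=\frac{1}{|\nabla F|}\frac{\partial F}{\partial v}$, and the identity above becomes
\begin{equation*}
\int_{\partial\Sigma}\frac{1}{|\nabla F|}\frac{\partial F}{\partial v}\,ds=0.
\end{equation*}
For part (i), if $\partial F/\partial v$ is everywhere strictly positive (resp. negative), then the integrand has a strict sign on $\partial\Sigma$; since $1$ is a regular value, $|\nabla F|>0$ keeps the sign intact, so the integral would be strictly positive (resp. negative). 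Because $\mathbb{R}^n$ carries no closed minimal hypersurface (harmonic coordinate functions on a closed manifold are constant), $\partial\Sigma$ is nonempty with positive measure, and the identity is violated, giving the claimed nonexistence.

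For part (ii), if $\partial F/\partial v\ge 0$ (resp. $\le 0$) everywhere, the integrand has constant sign while integrating to zero, so it must vanish identically on $\partial\Sigma$; using $|\nabla F|>0$ we get $\partial F/\partial v=0$, i.e. $\langle v,\nu\rangle=0$, on $\partial\Sigma$. I would then consider the harmonic function $u=\langle\mathbf{x},v\rangle$ on $\Sigma$ and observe that its Neumann data is $\partial u/\partial\nu=\langle\nabla_\Sigma u,\nu\rangle=\langle v,\nu\rangle=0$ on $\partial\Sigma$ (the normal component of $v$ drops out because $\langle N,\nu\rangle=0$ at the free boundary). A harmonic function with vanishing Neumann data on a compact manifold with boundary is constant, via $\int_\Sigma|\nabla_\Sigma u|^2\,d\Sigma=\int_{\partial\Sigma}u\,(\partial u/\partial\nu)\,ds=0$. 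Hence $\langle\mathbf{x},v\rangle\equiv c$, so $\Sigma$ lies in the hyperplane $\Pi=\{\langle x,v\rangle=c\}$; since then $T_p\Sigma=v^\perp$ for all $p$, the unit normal is the constant $v/|v|$, the shape operator vanishes, and $\Sigma$ is totally geodesic, contained in $\Omega\cap\Pi$.

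The main obstacle, and the point needing care, is the passage from a sign condition on $\partial F/\partial v$ to a conclusion: it requires knowing $\partial\Sigma\neq\emptyset$ (handled by the absence of closed minimal hypersurfaces in $\mathbb{R}^n$) and, in part (ii), promoting the pointwise boundary vanishing to constancy of $u$ through the Neumann argument rather than merely controlling a boundary integral. The remaining verifications, namely harmonicity of the coordinate functions, the relation $\langle N,\nu\rangle=0$ at the free boundary, and total geodesy once $\Sigma\subset\Pi$, are routine.
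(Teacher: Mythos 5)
Your proposal is correct and follows essentially the same route as the paper: both use the harmonicity of $\varphi=\langle v,x\rangle$ on a minimal $\Sigma$, the divergence theorem to get $\int_{\partial\Sigma}|\nabla F|^{-1}\,\partial F/\partial v\,ds=0$ via the free-boundary identification $\nu=\nabla F/|\nabla F|$, a sign argument for (i), and the vanishing-Neumann-data/energy argument for (ii) to conclude $\Sigma$ lies in a hyperplane. Your explicit remark that $\partial\Sigma\neq\emptyset$ (no closed minimal hypersurfaces in $\mathbb{R}^n$) is a small point the paper leaves implicit, but otherwise the arguments coincide.
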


\begin{proof}
Consider the function $\varphi=\interno{v}{x}$, where $v\in \mathbb{R}^{n}$. Then,
\[
\nabla_{\Sigma}\varphi=v-\interno{v}{N}N.
\]
Thus, on the boundary $\partial \Sigma$, we have that
\[
\frac{\partial \varphi}{\partial \nu}=\interno{v-\interno{v}{N}N}{\nu}=\interno{v}{\nu}=|\nabla F|^{-1}\interno{v}{\nabla F}.
\]
Since $\Sigma$ is free boundary: $\nu=|\nabla F|^{-1}\nabla F$ and $\interno{\nu}{N}=0$ on $\partial \Sigma$. If $\Sigma$ is minimal we have $\Delta_{\Sigma}\varphi=0$. Using integration by parts,
\[
0=\int_{\Sigma}\Delta_{\Sigma}\varphi\,d\Sigma=\int_{\partial\Sigma}\frac{\partial \varphi}{\partial \nu}\,ds=\int_{\partial\Sigma}|\nabla F|^{-1}\interno{v}{\nabla F}\,ds.
\]
Therefore, if either $\frac{\partial F}{\partial v}>0$ or $\frac{\partial F}{\partial v}<0$ we get a contradiction with the above identity. If either $\frac{\partial F}{\partial v}\leq0$ or $\frac{\partial F}{\partial v}\geq0$, we have
\[
\frac{\partial \varphi}{\partial \nu}=\frac{\partial F}{\partial v}=0\,, \mbox{on}\,\, \partial \Sigma\,.
\]
Then, since $\Delta_{\Sigma}\varphi =0$, we use the maximum principle (or integration by parts) to obtain that $\varphi=\interno{v}{x}$ is a constant  in $\Sigma$. This means that $\Sigma$ is contained in a plane.
\end{proof}

A slight change in the previous demonstration (doing $v=e_{i}$) shows that
\begin{corollary}\label{dF>0}
Under the same conditions of Proposition 1,
\begin{itemize}
\item[(i)] If exists $i$ such that either $\frac{\partial F}{\partial x_i}>0$ or $\frac{\partial F}{\partial x_i}<0$, then there is no immersed minimal hypersurface with free boundary in $\Omega$.
\item[(ii)] 
If exists $i$ such that either $\frac{\partial F}{\partial x_i}\leq0$ or $\frac{\partial F}{\partial x_i}\geq0$, then, $\frac{\partial F}{\partial x_i}=0$ on $\partial \Sigma$ and consequently $\Sigma$ is totally geodesic: $\Sigma$ is in an intersection $\Omega \cap \Pi$, where $\Pi$ is a plane.

\end{itemize}
\end{corollary}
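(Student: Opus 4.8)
The plan is to deduce this corollary directly from the previous proposition by specializing to the coordinate direction $v = e_i$, the $i$-th vector of the standard basis of $\mathbb{R}^n$. The one observation that makes everything go through is that for this choice the directional derivative appearing in that proposition is exactly a partial derivative, $\frac{\partial F}{\partial v} = \langle e_i, \nabla F\rangle = \frac{\partial F}{\partial x_i}$. Consequently each hypothesis on $\frac{\partial F}{\partial v}$ turns into the corresponding hypothesis on $\frac{\partial F}{\partial x_i}$, and both conclusions (i) and (ii) carry over verbatim.

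Carrying this out, I would set $\varphi = \langle e_i, x\rangle = x_i$ and recall from the proof of the previous proposition that $\nabla_\Sigma\varphi = e_i - \langle e_i, N\rangle N$, so that along $\partial\Sigma$ the free-boundary condition $\nu = |\nabla F|^{-1}\nabla F$ gives $\frac{\partial\varphi}{\partial\nu} = \langle e_i, \nu\rangle = |\nabla F|^{-1}\frac{\partial F}{\partial x_i}$. Since $\Sigma$ is minimal we have $\Delta_\Sigma\varphi = 0$, and integration by parts yields $0 = \int_{\partial\Sigma}|\nabla F|^{-1}\frac{\partial F}{\partial x_i}\,ds$. For part (i), a strict sign of $\frac{\partial F}{\partial x_i}$ forces the integrand $|\nabla F|^{-1}\frac{\partial F}{\partial x_i}$ to keep that strict sign, which contradicts the vanishing of the integral; hence no such free-boundary minimal $\Sigma$ can exist. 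For part (ii), a weak sign combined with a vanishing integral forces the integrand, and therefore $\frac{\partial F}{\partial x_i}$ itself (as $|\nabla F| > 0$), to vanish identically on $\partial\Sigma$, so that $\frac{\partial\varphi}{\partial\nu} = 0$ there. Then $\varphi = x_i$ is harmonic on $\Sigma$ with vanishing Neumann data, and the maximum principle (equivalently, integrating $|\nabla_\Sigma\varphi|^2$) shows $x_i$ is constant on $\Sigma$. Thus $\Sigma$ is contained in the coordinate hyperplane $\Pi = \{x_i = \mathrm{const}\}$ and is therefore totally geodesic.

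Since the full analytic content, namely the integral identity and the sign argument, already resides in the previous proposition, there is no substantive obstacle here. The only point meriting care is the identification of $\frac{\partial F}{\partial v}$ with $\frac{\partial F}{\partial x_i}$ when $v = e_i$, together with the remark that in case (ii) the containing plane $\Pi$ may be taken concretely to be the coordinate hyperplane $x_i = \mathrm{const}$, which makes the totally geodesic conclusion explicit.
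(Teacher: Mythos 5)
Your proposal is correct and coincides with the paper's own justification, which is precisely the one-line remark that the corollary follows from the preceding proposition by taking $v=e_i$ (so that $\frac{\partial F}{\partial v}=\frac{\partial F}{\partial x_i}$). Your spelled-out rerun of the integration-by-parts and sign argument with $\varphi=x_i$ is exactly the intended specialization, so there is nothing to add.
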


\begin{remark}
Note that all the results above are true for free boundary minimal submanifolds $\Sigma^k$ in $\Omega$. Uniqueness and non-existence results above are true for the case of any codimensional submanifolds.
\end{remark}

If $F$ is $k$-homogeneous, i.e., $F(tx)=t^kF(x), \ \ \forall t\in\R$, then it is easy to check that $kF(x)=\interno{\nabla F}{x}$. In particular if  $x\in\partial\Omega$,
$$k\cdot 1=\interno{\nabla F}{x}=|\nabla F|\interno{\bar{N}}{x}.$$ 
Multiplying both sides by $|\nabla F|^{-1}$ and taking the integral over $\partial\Sigma$  we can use (\ref{fbident}) to get the following result.


\begin{proposition}\label{Fhomo}
Suppose that $F$ is $k$-homogeneous differentiable. If $\Sigma$ is minimal free-boundary then
\[
\int_{\partial \Sigma}k|\nabla F|^{-1}ds=(n-1)|\Sigma|.
\] 
\end{proposition}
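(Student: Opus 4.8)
The plan is to reduce the claim to the free-boundary identity (\ref{fbident}) by exploiting Euler's relation for homogeneous functions. First I would record that $k$-homogeneity, $F(tx)=t^kF(x)$ for all $t\in\R$, gives upon differentiating in $t$ at $t=1$ the pointwise identity $\interno{\nabla F}{x}=kF(x)$ valid throughout $\R^n$. This is the only place where the homogeneity hypothesis enters, and it converts an a priori global scaling condition on $F$ into a first-order relation between $\nabla F$ and the position vector.

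Next I would specialize this relation to $\partial\Sigma$. Since $\partial\Sigma\subset\partial\Omega=F^{-1}(1)$, we have $F(x)=1$ there, so Euler's identity reads $\interno{\nabla F}{x}=k$ on $\partial\Sigma$. Invoking the hypothesis $\bar N=\nabla F/|\nabla F|$ --- legitimate because $1$ is a regular value, so $|\nabla F|$ is nonvanishing on $\partial\Omega$ --- I rewrite the left-hand side as $|\nabla F|\interno{\bar N}{x}$ and divide by $|\nabla F|$ to obtain the pointwise equality $k|\nabla F|^{-1}=\interno{x}{\bar N}$ along $\partial\Sigma$.

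Finally I would integrate this equality over $\partial\Sigma$ against arclength and apply (\ref{fbident}), yielding
\[
\int_{\partial\Sigma}k|\nabla F|^{-1}\,ds=\int_{\partial\Sigma}\interno{x}{\bar N}\,ds=(n-1)|\Sigma|,
\]
which is exactly the asserted formula. I do not expect a substantive obstacle: the argument is a pointwise computation followed by a single integration, and its two hidden hypotheses --- non-vanishing of $|\nabla F|$ on $\partial\Sigma$, and the availability of (\ref{fbident}) --- are supplied respectively by the regular-value assumption and by the preliminary derivation of (\ref{fbident}) from minimality and the free-boundary condition. The only modeling care needed is that $\Sigma$ be minimal, which is precisely what makes (\ref{fbident}), rather than a mean-curvature-weighted variant, applicable.
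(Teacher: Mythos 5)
Your argument is correct and coincides with the paper's own derivation: the authors likewise invoke Euler's identity $kF(x)=\interno{\nabla F}{x}$, evaluate it on $\partial\Omega=F^{-1}(1)$ to get $k=|\nabla F|\interno{\bar N}{x}$, divide by $|\nabla F|$, and integrate over $\partial\Sigma$ using (\ref{fbident}). No differences worth noting.
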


We note that Theorem \ref{Fhomo} remains true even when a constant $c$ is added to $F$. Indeed, the maps $G=F+c$ and $F$ have the same gradient field over $\Sigma$.  


\section{Existence of Minimal Free-Boundary Surfaces in quadrics domains}

Now, we consider the important examples of domains whose boundary is a quadric surface. Quadric surfaces are frequently used in geometric modelling. In all situations, we consider domains $\Omega\subset\mathbb{R}^{n}$ such that $\partial\Omega=F^{-1}(1)$, where $F:\mathbb{R}^{n}\longrightarrow\mathbb{R}$ is the map 

\begin{equation}\label{Ffull}
F(x_1, x_2, ..., x_n)=\sum_{i=1}^na_ix_i^2+bx_n+c,
\end{equation}
with $a_i \in \left\{-1,0,1\right\}$ and $b, c\in \R$.

From now on, we are interested to study the existence of minimal free-boundary surfaces $\Sigma\subset\Omega$ in such domains by analysing some choices for the coefficients $a_{i}$, $b$ and $c$ and, in this sense, approaching some known quadric surfaces.  

When $a_n=0$ we have $\frac{\partial F}{\partial x_n}=b$. In this case, if $b\neq0$, it follows from Corollary\,\ref{dF>0} that there is no minimal free-boundary surfaces $\Sigma\subset\Omega$. Then, our first existence result is the following.


\begin{theorem} \label{bneq0}
If the map $F$ is such that $a_n=0\neq b$, then there does not exist any immersed minimal hypersurface with free boundary on $\partial \Omega$. 
\end{theorem}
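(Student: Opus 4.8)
The plan is to reduce the statement directly to Corollary~\ref{dF>0}(i), since that result already packages the desired nonexistence conclusion once some partial derivative $\partial F/\partial x_i$ is shown to have a strict sign. The only work, then, is to single out the appropriate coordinate direction and to verify the strict-sign hypothesis for the specific $F$ appearing in \eqref{Ffull}.

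First I would compute $\partial F/\partial x_n$. Writing $F(x_1,\dots,x_n)=\sum_{i=1}^n a_i x_i^2 + b x_n + c$, the only contributions of the variable $x_n$ are the quadratic term $a_n x_n^2$ and the linear term $b x_n$. Under the hypothesis $a_n=0$ the quadratic term vanishes identically, so $F$ depends on $x_n$ solely through $b x_n$, and hence $\partial F/\partial x_n = b$ throughout $\mathbb{R}^{n}$. Using $b\neq 0$, I then observe that $\partial F/\partial x_n = b$ is a nonzero constant, so $\partial F/\partial x_n > 0$ everywhere (if $b>0$) or $\partial F/\partial x_n < 0$ everywhere (if $b<0$). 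In either case the strict-sign hypothesis of Corollary~\ref{dF>0}(i) is satisfied with $i=n$, and applying that corollary immediately yields that no immersed minimal hypersurface with free boundary on $\partial\Omega$ can exist.

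I do not anticipate a genuine obstacle here, as the argument is a direct specialization of Corollary~\ref{dF>0}. The only points deserving a moment's care are confirming that the standing assumptions of Proposition~\ref{intid}, namely that $1$ is a regular value of $F$ and that $\bar{N}=\nabla F/|\nabla F|$, are indeed in force for the quadric domain $\Omega$ under consideration, and noting that the sign condition is automatic because $\partial F/\partial x_n$ is literally a constant rather than merely of one sign, so there is no local-to-global subtlety to resolve.
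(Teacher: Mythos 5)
Your proposal is correct and takes essentially the same route as the paper: the text preceding the theorem observes that $a_n=0$ forces $\frac{\partial F}{\partial x_n}=b$, which is a nonzero constant when $b\neq 0$, and then invokes Corollary~\ref{dF>0}(i) with $i=n$. Your additional remark about verifying the standing hypotheses of Proposition~\ref{intid} is a sensible point of care but does not change the argument.
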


The above theorem ensures, for example, that there does not exist any immersed minimal hypersurface with free boundary on circular paraboloid, hyperbolic paraboloid and parabolic cylinder.

In order to study more  cases which Theorem \ref{bneq0} has not covered, we need of some ingredients associated to $F$. First, we calculate the gradient field $\nabla F\in T\R^n$. Here, $\left\{E_1, E_2,..., E_n\right\}$ is the canonical basis of $\R^n$.
$$\nabla F=\sum_{i=1}^{n-1}2a_ix_iE_i+(2a_nx_n+b)E_n$$
and
$$|\nabla F|=\sqrt{\sum_{i=1}^{n-1}4a_i^2x_i^2+(2a_nx_n+b)^2}.$$
Once $\Sigma$ is free boundary, we note that the field $\nabla F$ is tangent to $\Sigma$ over $\partial\Sigma$. 

Now we calculate the Laplacian of $F$. Here, $N_1, N_2, ..., N_n$ are the coordinates of $N$ in $\R^n$.
$$\Delta_\Sigma F=\Delta_{\R^n} F-\interno{\overline{\nabla}_N\nabla F}{N}=\sum_{i=1}^{n}\left[2a_i-2a_iN_i^2\right]=2\sum_{i=1}^{n}a_i(1-N_i^2).$$ 

A straightfull computation also gives          
$$\interno{x}{\frac{\nabla F}{|\nabla F|}}=\frac{\sum_{i=1}^n2a_ix_i^2+bx_n}{|\nabla F|}=\frac{2(F(x)-c)-bx_n}{|\nabla F|}=\frac{2(1-c)-bx_n}{|\nabla F|}.$$

Therefore, Proposition \ref{intid} gives 
\begin{equation}\label{eq1}
\int_{\partial\Sigma}|\nabla F|\,ds=\int_{\Sigma}\Delta_\Sigma F\,d\Sigma=\int_\Sigma2\sum_{i=1}^{n}a_i(1-N_i^2)\,d\Sigma
\end{equation}
and equation (\ref{fbident}) gives

\begin{equation}\label{eq2}
\int_{\partial\Sigma}\frac{2(1-c)-bx_n}{|\nabla F|}\,ds=\int_{\partial\Sigma}\interno{x}{\frac{\nabla F}{|\nabla F|}}\,ds=\int_{\partial\Sigma}\interno{x}{\bar{N}}\,ds=(n-1)|\Sigma|.
\end{equation}
If we make the difference between (\ref{eq1}) and (\ref{eq2}) it follows that
\begin{eqnarray}
& &\int_{\partial\Sigma}\frac{4\displaystyle\sum_{i=1}^{n-1}a_i^2x_i^2+(2a_nx_n+b)^2+bx_n-2(1-c)}{|\nabla F|}\,ds \label{geral}\\ 
& &=\int_\Sigma\left[2\sum_{i=1}^na_i(1-N_i^2)-(n-1)\right]\,d\Sigma.\nonumber
\end{eqnarray}
\normalsize


\begin{theorem}[Theorem A] \label{all=1}
Let $F$ be as in (\ref{Ffull}) with $b=0$, $c\leq0$ and at least one of the coefficients (say $a_n$) being different from 1. There exists a minimal hypersurface $\Sigma$ with free boundary on $\partial \Omega$ if and only if $a_1=a_2=...=a_{n-1}=1$. Besides, 
\begin{enumerate}
	\item[(a)] $a_n=-1 \ \Rightarrow \ \Sigma$ is the flat disk supported at the origin.
	\item[(b)] $a_n=0 \ \Rightarrow \ \Sigma$ are the flat disks intersecting $\partial\Omega$ orthogonally.
\end{enumerate}   
\end{theorem}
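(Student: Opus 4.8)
The plan is to set $b=0$ (as hypothesized), so $F=\sum_{i=1}^n a_ix_i^2+c$, and then to extract from minimality and the free-boundary condition a family of \emph{per-coordinate} integral identities that are sharper than the aggregate formula (\ref{geral}). First I would record, for each fixed index $i$, that minimality gives $\Delta_\Sigma x_i=0$, hence
\[
\Delta_\Sigma(x_i^2)=2|\nabla_\Sigma x_i|^2=2(1-N_i^2).
\]
Integrating over $\Sigma$ and applying the divergence theorem, the free-boundary condition $\nu=\bar N=\nabla F/|\nabla F|$ lets me rewrite the conormal derivative as $\partial_\nu(x_i^2)=2x_i\interno{e_i}{\nu}=2x_i(\partial F/\partial x_i)/|\nabla F|=4a_ix_i^2/|\nabla F|$ on $\partial\Sigma$. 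This yields the key identity
\[
\int_\Sigma(1-N_i^2)\,d\Sigma=\int_{\partial\Sigma}\frac{2a_ix_i^2}{|\nabla F|}\,ds,\qquad i=1,\dots,n.
\]
Summing over $i$ and using $\sum_i a_ix_i^2=1-c$ on $\partial\Sigma$ recovers (\ref{eq2}), a useful consistency check.

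The necessity direction is then immediate. Whenever $a_i\le 0$ the right-hand side above is $\le 0$ while the left-hand side is $\ge 0$, so both vanish. Vanishing of $\int_\Sigma(1-N_i^2)$ forces $N_i^2\equiv 1$, i.e. the Gauss map is the constant $\pm e_i$ and $\Sigma$ lies in a hyperplane $\{x_i=\text{const}\}$; if moreover $a_i=-1$, then vanishing of $\int_{\partial\Sigma}2x_i^2/|\nabla F|$ forces $x_i\equiv 0$ on $\partial\Sigma$, hence $\Sigma\subset\{x_i=0\}$. The crucial observation is that \emph{at most one} index can have $a_i\le 0$: if two distinct indices $i\ne j$ both had $a\le 0$ we would get $N_i^2\equiv 1$ and $N_j^2\equiv 1$ simultaneously, contradicting $|N|=1$. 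Since the hypothesis $a_n\ne 1$ means precisely $a_n\le 0$, this single slot is already occupied, so every remaining coefficient must satisfy $a_i=1$ for all $i\le n-1$, which is the asserted condition.

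It remains to identify $\Sigma$ when $a_1=\dots=a_{n-1}=1$, which is where the two cases split. If $a_n=-1$, the argument above applied to $i=n$ already gives $\Sigma\subset\{x_n=0\}$ with constant normal $\pm e_n$; on this hyperplane $F$ reduces to $\sum_{i<n}x_i^2+c$, so $\partial\Sigma$ lies on the sphere $\sum_{i<n}x_i^2=1-c$ and $\Sigma$ is the flat disk through the origin of radius $\sqrt{1-c}$, giving (a). If $a_n=0$, the identity for $i=n$ has vanishing right-hand side, forcing $N\equiv\pm e_n$ and $\Sigma\subset\{x_n=\text{const}\}$, but with no constraint pinning the height; these are exactly the flat cross-sectional disks of the cylinder meeting $\partial\Omega$ orthogonally, giving (b). For the converse (existence) I would verify directly that each such flat disk is minimal and that its radial conormal coincides with $\bar N=\nabla F/|\nabla F|$ along the boundary sphere. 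The role of $c\le 0$ is to guarantee $1-c>0$, so that these disks are genuine nondegenerate disks and $\partial\Omega$ is nonempty.

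The main obstacle I anticipate is not the sign argument—which is short once the per-coordinate identities are in hand—but the geometric bookkeeping needed to turn the constant-normal conclusion into a compact admissible free-boundary disk: checking the orientation of $\bar N$, the nonemptiness of the relevant slices, and that the boundary sphere genuinely bounds the full disk inside $\Omega$. A secondary point to handle carefully is that the conclusion $N\equiv\pm e_i$ must be justified for a possibly disconnected or merely immersed $\Sigma$, where the continuity argument giving $1-N_i^2\equiv 0$ from a vanishing nonnegative integral should be applied componentwise.
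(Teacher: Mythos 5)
Your proof is correct, and it takes a genuinely different (finer) decomposition than the paper's. The paper works with one aggregate identity: it applies Proposition~\ref{intid} with $\varphi=1$ to the full quadratic $F$ to get (\ref{eq1}), subtracts the Minkowski-type identity (\ref{eq2}), and then rewrites the resulting formula (\ref{geral}) on $\partial\Sigma$ (using $\sum_i a_ix_i^2=1-c$ there together with Proposition~\ref{Fhomo}) into (\ref{geralb=0}), where a squeeze $0\le\cdot\le\cdot\le 0$ forces all integrands to vanish; the hypothesis $c\le 0$ enters precisely to control the sign of the term $c(n-1)|\Sigma|$. You instead disaggregate: integrating $\Delta_\Sigma(x_i^2)=2(1-N_i^2)$ for each $i$ and using $\nu=\nabla F/|\nabla F|$ gives the $n$ separate identities $\int_\Sigma(1-N_i^2)\,d\Sigma=\int_{\partial\Sigma}2a_ix_i^2|\nabla F|^{-1}\,ds$, each of which already has a definite sign whenever $a_i\le 0$. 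This buys three things: the "at most one coefficient $\ne 1$" step becomes an immediate consequence of $|N|=1$; the extra conclusion $x_n\equiv 0$ on $\partial\Sigma$ in case (a) drops out of the same identity for $i=n$; and the necessity direction no longer uses $c\le 0$ at all, nor the invocation of Proposition~\ref{Fhomo} for the non-homogeneous $F$ — note that (\ref{eq2}) already evaluates $\int_{\partial\Sigma}2(1-c)|\nabla F|^{-1}\,ds$ as $(n-1)|\Sigma|$ rather than $(1-c)(n-1)|\Sigma|$, so your per-coordinate identities (whose sum is exactly (\ref{eq2}), as you check) quietly sidestep a bookkeeping subtlety in the derivation of (\ref{geralb=0}). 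The remaining endgame — upgrading the constant-normal conclusion to the specific flat disks and verifying they are indeed free-boundary minimal — is handled at the same level of rigor as in the paper, and your remark about arguing componentwise for a possibly disconnected $\Sigma$ is the right precaution.
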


\begin{proof}
Without loss of generality we can assume that the first $k$ coefficients $a_1,..., a_k$, are equal to 1. Then, $F$ can be written as 
\begin{equation}\label{b=0}
F(x_1,x_2,..., x_n)=\sum_{i=1}^kx_i^2+\sum_{i=k+1}^na_ix_i^2+c.
\end{equation}
Putting $b=0$ on the left side of (\ref{geral}) gives
\begin{eqnarray}
& &\displaystyle\int_{\partial\Sigma}\frac{4\sum_{i=1}^na_i^2x_i^2-2(1-c)}{|\nabla F|}\, ds \nonumber\\ && \\
&=& \int_{\partial\Sigma}\frac{4\left[1-c+\sum_{i=k+1}^n\left(-a_ix_i^2+a_i^2x_i^2\right)\right]-2(1-c)}{|\nabla F|}\, ds \nonumber	\\
                        &=& \int_{\partial\Sigma}\frac{\sum_{i=k+1}^n4a_i\left(a_i-1\right)x_i^2+2(1-c)}{|\nabla F|} \, ds\nonumber \\
                        &=& \int_{\partial\Sigma}\frac{\sum_{i=k+1}^n4a_i\left(a_i-1\right)x_i^2}{|\nabla F|}\, ds+(1-c)(n-1)|\Sigma|. \nonumber
\end{eqnarray}
where in the last equality we used Proposition \ref{Fhomo}. On other hand, the right side of (\ref{geral}) gives
\begin{eqnarray}
& &\int_\Sigma\left[2\sum_{i=1}^na_i(1-N_i^2)-(n-1)\right]\, d\Sigma\nonumber\\
&=& \int_\Sigma\left[2\left(\sum_{i=1}^k(1-N_i^2)+\sum_{i=k+1}^na_i(1-N_i^2)\right)-(n-1)\right]\, d\Sigma \nonumber\\
&=&\int_\Sigma\left[2\left(k-1+\sum_{i=k+1}^nN_i^2+\sum_{i=k+1}^na_i(1-N_i^2)\right)-(n-1)\right]\, d\Sigma \nonumber \\
&=&\int_\Sigma\left[2\left(n-1+\sum_{i=k+1}^n\left((N_i^2-1)+a_i(1-N_i^2)\right)\right)-(n-1)\right]\, d\Sigma \nonumber \\
&=& (n-1)|\Sigma|+2\int_\Sigma\sum_{i=k+1}^n(N_i^2-1)(1-a_i)\, d\Sigma. \nonumber 
\end{eqnarray}
Now, putting this information together in both sides of (\ref{geral}) gives
\begin{equation}\label{geralb=0}
  \int_{\partial\Sigma}\frac{\sum_{i=k+1}^n4a_i\left(a_i-1\right)x_i^2}{|\nabla F|}\, ds=c(n-1)|\Sigma|+2\int_\Sigma\sum_{i=k+1}^n(N_i^2-1)(1-a_i)\, d\Sigma.
\end{equation}
Then, once $c\leq0, a_i\in\left\{-1,0,1\right\}$ and $|N|=1$, we have 
\begin{equation}
 0\leq\int_{\partial\Sigma}\frac{\sum_{i=k+1}^n2a_i\left(a_i-1\right)x_i^2}{|\nabla F|}\, ds\leq\int_\Sigma\sum_{i=k+1}^n(N_i^2-1)(1-a_i)\, d\Sigma\leq0, \nonumber
\end{equation}
which implies that both terms inside the integrals are zero. In particular, 
$$(N_i^2-1)(1-a_i)=0, \ \ \ \forall i=k+1, k+2, ..., n.$$
The equality above is possible only if exactly one among all $a_i$'s is different from 1 and all others are equal to 1. Without loss of generality we can admit that $a_n\neq1$.

Finally, if we put $a_1=a_2=...=a_{n-1}=1$ and $a_n=-1$ in the inequality above we get $x_n=0$ and $N_n=1$ which implies that $\Sigma$ is the flat disk supported at the origin. On other hand, putting $a_1=a_2=...=a_{n-1}=1$ and $a_n=0$ gives  $N_n=1$ and then $\Sigma$ can be any of the flat disks intersecting $\partial\Omega$ orthogonally.
\end{proof}

Theorem \ref{all=1} assures that among all quadric domains, whose boundary is a level set of $F$ as in (\ref{b=0}), the only ones which contain free boundary minimal hypersurfaces are the hyperboloid of one sheet and the cylinder. Theorem \ref{all=1} also assures that does not exist free boundary minimal hypersurfaces on $\s^{n-k-1}\times\R^{k}$ (with  $k\geq2$) neither on slab (i.e. when $F$ is as in \ref{b=0} with all $a_i=0$ but $a_n=1$).

The next result shows that does not exist free boundary minimal hypersurfaces on cones neither on hyperboloid of two sheets.


\begin{theorem} \label{2sheets}
Suppose that  $F$ is given by 
\begin{equation}\label{c>1}
F(x_1,x_2,..., x_n)=\sum_{i=1}^{n-1}x_i^2-x_n^2+c.
\end{equation}
with $c\geq1$.Then,  does not exist minimal hypersurface with free boundary  on $\partial \Omega$.   
\end{theorem}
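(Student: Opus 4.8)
The plan is to specialize the free-boundary identity (\ref{eq2}) to the present $F$ and extract an immediate sign contradiction. For the map in (\ref{c>1}) we have $b=0$, $a_1=\cdots=a_{n-1}=1$ and $a_n=-1$, so the left-hand integrand $\frac{2(1-c)-bx_n}{|\nabla F|}$ collapses to $\frac{2(1-c)}{|\nabla F|}$, and (\ref{eq2}) reads
\[
2(1-c)\int_{\partial\Sigma}\frac{1}{|\nabla F|}\,ds=(n-1)|\Sigma|.
\]
First I would note that the right-hand side is strictly positive: a free-boundary minimal hypersurface has $|\Sigma|>0$, and $\partial\Sigma\neq\emptyset$, since otherwise (\ref{fbident}) already forces $(n-1)|\Sigma|=0$. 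On the left, the hypothesis $c\geq1$ gives $1-c\leq0$, while $|\nabla F|>0$ along the regular part of the boundary; hence the integrand is nonpositive and the whole left-hand side is $\leq0$. This contradicts the positive right-hand side, and therefore no such $\Sigma$ exists.

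The step I expect to require the most care is the borderline value $c=1$, which is exactly the cone $\sum_{i=1}^{n-1}x_i^2=x_n^2$: here $\nabla F$ vanishes at the origin, a point of $\partial\Omega$, so $1$ is not a regular value and the hypotheses of Proposition~\ref{intid} hold only away from the vertex. I would dispose of this as follows. For $c=1$ the left-hand integrand $2(1-c)/|\nabla F|$ vanishes identically wherever it is defined, so the left-hand side equals $0$ irrespective of any behaviour near the vertex, while the right-hand side is $(n-1)|\Sigma|>0$; the contradiction survives. Equivalently, on $\partial\Sigma$ one has $\langle x,\nabla F\rangle=2(F-c)=0$, so $\langle x,\bar N\rangle\equiv0$ and (\ref{fbident}) directly yields $(n-1)|\Sigma|=0$, impossible. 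For $c>1$ the origin satisfies $F(0)=c\neq1$, hence it does not lie on $\partial\Omega$, so $1$ is a genuine regular value and the argument runs without obstruction.

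Alternatively, and perhaps more geometrically, for $c>1$ one could invoke Corollary~\ref{dF>0}(i): the solid domain $\{F\leq1\}=\{x_n^2\geq\sum_{i=1}^{n-1}x_i^2+(c-1)\}$ splits into the two components $x_n\geq\sqrt{c-1}$ and $x_n\leq-\sqrt{c-1}$, on each of which $x_n$ keeps a fixed sign; thus $\frac{\partial F}{\partial x_n}=-2x_n$ has one sign on any connected $\Sigma$, and the corollary rules out free-boundary minimal hypersurfaces directly. I regard the regular-value bookkeeping at the cone's vertex as the only genuine subtlety; everything else follows at once from the identities already established.
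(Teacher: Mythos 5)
Your route is genuinely different from the paper's and, where it applies, much shorter: you read the conclusion directly off the sign of the integral identity (\ref{eq2}), whereas the paper argues geometrically, first excluding $\Sigma\subset D_1$ and $\Sigma\subset D_2$ via the sign of $\partial F/\partial x_n$, and then, for a hypersurface with boundary components on both sheets, running a first-touching argument against the cylinders $G^{-1}(l)$, $G=\sum_{i=1}^{n-1}x_i^2$, and contradicting minimality through the Hessian of $G|_\Sigma$ at an interior maximum point. Your computation is correct as far as it goes, your treatment of the borderline cone $c=1$ is sound (there $\langle x,\nabla F\rangle=2(F-c)=0$ on $\partial\Omega$, so $\langle x,\bar N\rangle\equiv 0$ regardless of the orientation of $\bar N$ and (\ref{fbident}) kills $|\Sigma|$ outright), and your alternative via Corollary~\ref{dF>0}(i) reproduces exactly the first paragraph of the paper's proof.

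The gap is for $c>1$, and it is the orientation of $\bar N$. Identity (\ref{eq2}) is derived under the hypothesis $\bar N=\nabla F/|\nabla F|$ of Proposition~\ref{intid}, which holds precisely when $\Omega$ is one of the two solid caps of $\{F\le 1\}$; in that situation your sign argument (or, even more simply, Corollary~\ref{dF>0}(i), since $x_n$ has a strict sign on each cap) finishes the proof. But the configuration to which the paper's proof devotes most of its effort is a connected $\Sigma$ lying in the region between the two sheets with boundary components on both; there the outward unit normal of the ambient region is $-\nabla F/|\nabla F|$, and (\ref{fbident}) instead gives
\begin{equation*}
(n-1)|\Sigma|=\int_{\partial\Sigma}\langle x,\bar N\rangle\,ds=\int_{\partial\Sigma}\frac{2(c-1)}{|\nabla F|}\,ds,
\end{equation*}
which is perfectly consistent for $c>1$: both sides are positive and no contradiction arises. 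Your proposal therefore does not exclude this spanning configuration, which is exactly the case the paper handles with the touching/maximum-principle argument. To close the gap you would need either to restrict the statement to the convention $\bar N=\nabla F/|\nabla F|$ (a strictly weaker theorem than the one the paper proves) or to add an argument, such as the paper's comparison with the cylinders $G^{-1}(l)$, that is insensitive to the sign of $\bar N$.
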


\begin{proof}

Suppose that do exist a minimal free boundary hypersurface $\Sigma$ on $\Omega$. We note that $\Sigma$ can not be on neither of the domains 
$$D_1= \left\{(x_1,..., x_n)\in\R^n; x_n\geq\sqrt{\sum_{i=1}^{n-1}x_i^2+c-1}\right\}$$
or
$$D_2=\left\{(x_1,..., x_n)\in\R^n; x_n\leq-\sqrt{\sum_{i=1}^{n-1}x_i^2+c-1}\right\}.$$  
Otherwise, we would have $\frac{\partial F}{\partial x_n}<0$ over $D_1$ or $\frac{\partial F}{\partial x_n}>0$ over $D_2$ and Corollary \ref{dF>0} applied to $\left.F\right|_{D_i}$ would give a contradiction. Then, $\Sigma$ must have boundary components on $\partial D_1$ and $\partial D_2$. 

Once $\Sigma$ is free boundary, we note that the conormal field of $\Sigma$, at any $x\in\partial\Sigma$, points out to the same direction as

$$\nabla F(x)=2\left(\sum_{i=1}^{n-1}x_iE_i-x_nE_n\right)=2\left(x-2x_nE_n\right),$$ 
where $\left\{E_1, E_2, ..., E_n\right\}$ is the canonical basis of $\R^n$.

Let $G:\R^n\rightarrow\R$ be the map
\begin{equation}\label{G}
G(x_1,x_2,..., x_n)=\sum_{i=1}^{n-1}x_i^2. \nonumber
\end{equation}
We note that $G^{-1}(l),\  l>0$, are regular sets which the unitary normal field over the level $l$ at $x\in\R^n$ is
\begin{equation}\label{NofG}
\mathcal{N}(x)=\frac{\nabla G}{|\nabla G|}=\frac{1}{\sqrt{l}}\sum_{i=1}^{n-1}x_iE_i=\frac{1}{\sqrt{l}}\left(x-x_nE_n\right), \nonumber
\end{equation}
Once  $\Sigma$ is compact, there is a $l_0>0$ such that $\Sigma\subset G^{-1}(l<l_0)$. We can decreasing $l_0>0$ until the surface $G^{-1}(l_0)$ touch $\Sigma$ for the first time at the point $\tilde{x}$. Then, $G^{-1}(l_0)$ and $\Sigma$ are tangent at $\tilde{x}$. In particular, $N(\tilde{x})=\mathcal{N}(\tilde{x})$. It follows that $\tilde{x}\notin\partial\Sigma$, otherwise we would have
$$0=\interno{N(\tilde{x})}{\nabla F(\tilde{x})}=\interno{\mathcal{N}(\tilde{x})}{\nabla F(\tilde{x})}=\frac{2}{\sqrt{l_0}}\left(|\tilde{x}|^2-\tilde{x}_n^2\right)>0.$$
Now, let $\rho=G|_{\Sigma}$. A straightforward calculation gives
$$\nabla\rho(x)=2x-2\interno{x}{E_n}E_n$$
and, for all $X,Y\in T\Sigma$,
{\small
\begin{eqnarray}
\text{Hess}_\Sigma\rho(X,Y)  &=& \text{Hess}_{\R^n}\rho(X,Y)+\interno{AX}{Y}\interno{\nabla \rho}{N} \nonumber	\\
                             &=& 2\left[\interno{X}{Y}-\interno{X}{E_n}\interno{Y}{E_n}+\interno{AX}{Y}\left(\interno{x}{N}-\interno{x}{E_n}\interno{N}{E_n}\right)\right]. \nonumber
\end{eqnarray} }
Once $\rho$ attains it's maximum at $\tilde{x}$, it follows that  
{\small\begin{eqnarray}
0\geq\frac{1}{2}\text{Hess}_\Sigma\rho(X,X)  &=& |X|^2-\interno{X}{E_n}^2+\interno{AX}{X}\left(\interno{\tilde{x}}{\mathcal{N}}-\interno{\tilde{x}}{E_n}\interno{\mathcal{N}}{E_n}\right) \nonumber	\\
                                  &=& |X|^2-\interno{X}{E_n}^2+\frac{1}{\sqrt{l_0}}\interno{AX}{X}\left(|\tilde{x}|^2-\tilde{x}_n^2\right).  \nonumber
\end{eqnarray} }
The inequality above then gives
\begin{equation}\label{A<0}
\interno{AX}{X}\leq-\frac{|X|^2-\interno{X}{E_n}^2}{|\tilde{x}|^2-\tilde{x}_n^2}\sqrt{l_0 }\leq 0, \  \ \ \forall\ X\in T_{\tilde{x}}\Sigma.
\end{equation}
But, once $\Sigma$ is a minimal hypersurface, the inequality (\ref{A<0}) is possible only if $\interno{AX}{X}=0, \ \ \forall X\in T_{\tilde{x}}\Sigma$. However, one can put $X\in T_{\tilde{x}}\Sigma$ with $X\neq 0$ and $\interno{X}{E_n}=0$ in (\ref{A<0}) (which indeed is possible because $T_{\tilde{x}}\Sigma=T_{\tilde{x}}G^{-1}(l_0)$)  and give the contradiction $\interno{AX}{X}<0$.   
\end{proof}

The last result of this section assures that there is no minimal hypersurfaces with free boundary neither on cylinders over cones or cylinders over hyperbola.  

\begin{theorem} \label{cilindcone}
Suppose that  $F$ is given by 
\begin{equation} 
F(x_1,x_2,..., x_n)=\sum_{i=1}^{n-2}x_i^2-x_n^2+c.
\end{equation}
with $c\geq1$.Then,  does not exist minimal hypersurface with free boundary  on $\partial \Omega$.   
\end{theorem}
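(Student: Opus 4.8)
The plan is to read off a contradiction directly from the Minkowski-type balancing identity (\ref{eq2}), which was already established for every $F$ of the form (\ref{Ffull}). The present $F$ fits that template with $a_1=\cdots=a_{n-2}=1$, $a_{n-1}=0$, $a_n=-1$, $b=0$ and $c\geq 1$; the decisive structural feature is that the variable $x_{n-1}$ is absent, so $\partial F/\partial x_{n-1}\equiv 0$ and $\Omega=\{F\leq 1\}$ is a cylinder in the $E_{n-1}$-direction over a solid two-sheeted hyperboloid (a solid double cone when $c=1$).

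First I would assume, for contradiction, that a compact free-boundary minimal $\Sigma\subset\Omega$ exists, and record two preliminary facts. Its boundary cannot be empty: a closed minimal hypersurface in $\R^n$ is excluded because $|x|^2$ satisfies $\Delta_{\Sigma}|x|^2=2(n-1)>0$, hence is strictly subharmonic and cannot attain an interior maximum on a compact boundaryless $\Sigma$. Thus $\partial\Sigma\neq\emptyset$ and $|\Sigma|>0$. Moreover, since $1$ is a regular value, $|\nabla F|>0$ all along $\partial\Sigma\subset F^{-1}(1)$.

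Next I would evaluate $\langle x,\bar{N}\rangle=\langle x,\nabla F\rangle/|\nabla F|$ on $\partial\Sigma$. Because $\partial F/\partial x_{n-1}=0$, one has $\langle x,\nabla F\rangle=2\big(\sum_{i=1}^{n-2}x_i^2-x_n^2\big)$, and this equals $2(1-c)$ pointwise on $\partial\Sigma$, since $F=1$ there. Substituting into (\ref{eq2}) (equivalently (\ref{fbident})) with $b=0$ gives
$$(n-1)|\Sigma|=\int_{\partial\Sigma}\langle x,\bar{N}\rangle\,ds=\int_{\partial\Sigma}\frac{2(1-c)}{|\nabla F|}\,ds.$$
The hypothesis $c\geq 1$ forces the integrand to be nonpositive, so the right-hand side is $\leq 0$, while the left-hand side is strictly positive; this is the desired contradiction, and it holds uniformly for all $n\geq 3$.

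I do not anticipate a genuine obstacle, since the hard analytic work lies in the already-proved identity (\ref{eq2}); the computation mirrors exactly the situation of the two-sheeted hyperboloid in Theorem \ref{2sheets}, with the sign $1-c\leq 0$ again playing the decisive role. The only points needing care are verifying $\partial\Sigma\neq\emptyset$ (so that $(n-1)|\Sigma|$ truly fails to vanish) and confirming that this $F$ meets the orientation hypothesis $\bar{N}=\nabla F/|\nabla F|$ of Proposition \ref{intid}. A more geometric route is tempting — Corollary \ref{dF>0}(ii) applied with the index $n-1$ shows that $\partial F/\partial x_{n-1}\equiv 0$ confines $\Sigma$ to a hyperplane $\{x_{n-1}=a\}$, seemingly reducing the problem to the hyperboloid case — but in that slice $\Sigma$ becomes top-dimensional, so Theorem \ref{2sheets} cannot be invoked verbatim; the identity-based argument sidesteps this dimensional mismatch entirely.
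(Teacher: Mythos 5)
Your argument has a genuine gap, and it is exactly the point you flag as a formality at the end: the orientation hypothesis $\bar N = \nabla F/|\nabla F|$. The identity (\ref{fbident}) reads $(n-1)|\Sigma| = \int_{\partial\Sigma}\langle x,\nu\rangle\,ds$ with $\nu$ the \emph{outward} conormal of $\partial\Sigma$ in $\Sigma$, and the free-boundary condition only gives $\nu = \pm\nabla F/|\nabla F|$, the sign being $+$ precisely when $\Sigma$ lies on the side $\{F\le 1\}$. For this $F$ with $c\ge 1$, the set $\{F\le 1\}$ is the union of the two convex regions $\bigl\{\pm x_n \ge \sqrt{\sum_{i\le n-2}x_i^2 + c-1}\bigr\}$, on each of which $\partial F/\partial x_n$ has a strict sign, so that case is already disposed of by Corollary \ref{dF>0}(i). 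The case the theorem is really about --- the one the model proof, Theorem \ref{2sheets}, explicitly addresses by allowing $\Sigma$ to have boundary components on both sheets --- is $\Sigma\subset\{F\ge 1\}$, the region between the sheets. There $\nu = -\nabla F/|\nabla F|$, and your computation gives $(n-1)|\Sigma| = \int_{\partial\Sigma} 2(c-1)/|\nabla F|\,ds \ge 0$, which is no contradiction at all. This is precisely why the paper does not argue via (\ref{eq2}) here, nor in Theorem \ref{2sheets} where the same pointwise identity $\langle x,\nabla F\rangle = 2(1-c)$ holds: the balancing identity is orientation-sensitive and is perfectly consistent in the geometrically interesting configuration.

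The paper's own proof is one line: repeat the touching/maximum-principle argument of Theorem \ref{2sheets} with $G(x) = x_{n-1}^2$ in place of $\sum_{i=1}^{n-1}x_i^2$; that argument is insensitive to the sign of $\nu$ because it only uses that $\nabla F$ is tangent to $\Sigma$ along $\partial\Sigma$. Ironically, the ``geometric route'' you set up and then abandon is the one that can be completed, and it too is sign-independent: since $\partial F/\partial x_{n-1}\equiv 0$, Corollary \ref{dF>0}(ii) (whose proof uses only $\nu\parallel\nabla F$) confines $\Sigma$ to a hyperplane $\{x_{n-1}=a\}$. The dimensional mismatch you worry about is not an obstacle but the finishing move: a compact $(n-1)$-manifold with boundary immersed in an $(n-1)$-plane is a compact domain of that plane whose topological boundary lies on $F^{-1}(1)\cap\{x_{n-1}=a\}$, a two-sheeted hyperboloid (or cone) in that hyperplane; every complementary component of that set in the hyperplane is unbounded, so no such compact domain exists.
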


\begin{proof}
We can follow the same steps like we have just done in Theorem \ref{2sheets} but using the functions $G(x)=x_{n-1}^2$.
\end{proof}
\vspace{1cm}



\section{Existence of Minimal Free-Boundary Hypersurfaces in rotational graphics}

Let us consider a rotational hypersurface in the following sense. Let $\alpha(t)=(f(t),t)$ be a plane curve $\alpha$ that is the graph of a positive real valued smooth function $f:I\longrightarrow\mathbb{R}$ in the $x_1x_{n+1}$-plane. Let $\Theta$ be a parametrization of the $(n-1)-$ dimensional unit sphere in the hyperplane $x_{n+1}=0$. The hypersurface of revolution $\Sigma$ with generatriz $\alpha$ can be parametrized by
$$X(\Theta,t)=(\Theta f(t),t).$$
In this scope, we discuss conditions to the existence of minimal free-boundary surfaces in domains $\Omega$ which boundary is a hypersurface of revolution. Let us denote $x=(x_1,\cdots,x_n)$ and $y=x_{n+1}$.  Let $F:\mathbb{R}^{n+1}=\mathbb{R}^n\times \mathbb{R} \to \mathbb{R}$ be the smooth function defined by

$$F(x,y)=|x|^2-f^{2}(y)+1,$$ 
we have that $\partial\Omega\subset F^{-1}(1)$. Notice  that $1$ is a regular value of $F$ because
$$\nabla F=(2x,-2ff').$$ Thus
$$|\nabla F|=2\sqrt{|x|^{2}+f^{2}(f')^{2}}.$$
In particular, if $\Sigma\subset\Omega$ is a hypersurface with $\partial\Sigma\subset\partial\Omega$, then
$$|\nabla F|\Big|_{\partial\Sigma}=2f\sqrt{1+(f')^{2}}.$$
We observe that

$$
D^{2}F=\left(\begin{array}{cc}
2I_n&0\\
0&-2(f')^{2}-2ff''
\end{array}\right)
$$
where $I_n$ means the identity map in $\mathbb{R}^n$.

In this setting, if $N=(N_{1},\cdots,N_{n+1})$ is the outward unit normal to $\Sigma$, and $\Sigma$ is minimal, we have that

\begin{eqnarray*}
\Delta_{\Sigma}F&=&\Delta_{\mathbb{R}^{n+1}}F-D^{2}F(N,N)\\
& = &2n-2(f')^{2}-2ff''-2(N_{1}^{2}+\cdots+N_{n}^{2}-[(f')^{2}+ff'']N_{n+1}^{2})\\
& = & 2n+2(N_{n+1}^{2}-1)-2[(f')^{2}+ff''](1-N_{n+1}^{2})
\end{eqnarray*}
where we have used that $N_{1}^{2}+\cdots+N_{n}^{2}=1-N_{n+1}^{2}$. Thus, using the Proposition\,\ref{intid}, we conclude that
\begin{eqnarray}  \label{eqgr1}
\int_{\partial\Sigma}f\sqrt{1+(f')^{2}}\,ds &=& \frac{1}{2} \int_{\partial\Sigma}|\nabla F|\,ds\ =\  \frac{1}{2} \int_{\Sigma} \Delta_{\Sigma}F \,d\Sigma  \nonumber\\
                                            &=& n|\Sigma|+\int_{\Sigma}(N_{n+1}^{2}-1)[(f')^{2}+ff''+1]\,d\Sigma. 
\end{eqnarray}
On another hand, using the identity (\ref{fbident}) in this case and observing that  the outward unit normal to $\partial\Omega$ is $\bar{N}=\frac{\nabla F}{|\nabla F|}$, we have that
\begin{equation}\label{eqgr2}
n|\Sigma|=\int_{\partial\Sigma}\langle (x,y),\bar{N}\rangle\,ds = \int_{\partial\Sigma}\frac{|x|^2-yff'}{f\sqrt{1+(f')^{2}}}\,ds =
\int_{\partial\Sigma}\frac{f-yf'}{\sqrt{1+(f')^{2}}}\,ds.
\end{equation}
Confronting (\ref{eqgr2}) and (\ref{eqgr1}), we have that
\begin{equation}
\int_{\partial\Sigma}\frac{f(f')^{2}+yf'}{\sqrt{1+(f')^{2}}}\,ds=\int_{\Sigma}(N_{n+1}^{2}-1)[(f')^{2}+ff''+1]\,d\Sigma.
\end{equation}
 
In particular, under some conditions in $f$, we will have nonexistence results of free-boundary minimal hypersurfaces.  

Note that if $\Sigma \subset \Omega'$ where $\Omega'$ is a piece of $\Omega$ with $f'\geq0$ and $f''\geq0$, then $\Sigma$ is a flat disk, and the boundary $\partial \Sigma$ is at $f'(t)=0$.
In general, we can conclude the following 

\begin{theorem}\label{rotationalgraphic}
Assume that $\Sigma \subset \Omega'$ where $\Omega'$ is a piece of $\Omega$ with $f'\geq0$. Then $\Sigma$ is a flat disk, and the boundary $\partial \Sigma$ is at $f'(t)=0$. 
\end{theorem}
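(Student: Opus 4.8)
The plan is to run the totally--geodesic dichotomy of Corollary~\ref{dF>0} in the vertical (axis) direction $E_{n+1}$, and then to identify the resulting planar hypersurface as a flat disk. I would argue directly with the height function rather than invoking the integral identity derived above, because the assumption $f'\geq 0$ holds only on the piece $\Omega'$, and I want the sign condition to be used only along $\partial\Sigma$.

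First I would record the relevant sign. From $F(x,y)=|x|^{2}-f^{2}(y)+1$ one has $\frac{\partial F}{\partial x_{n+1}}=\langle\nabla F,E_{n+1}\rangle=-2ff'$. Since $f>0$ and $f'\geq 0$ on $\Omega'$, this gives $\frac{\partial F}{\partial x_{n+1}}\leq 0$ throughout $\Omega'$, in particular along $\partial\Sigma\subset\partial\Omega\cap\Omega'$. This places us in the setting of Corollary~\ref{dF>0}(ii) applied in $\mathbb{R}^{n+1}$ with $i=n+1$.

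Next I would carry out the underlying computation explicitly, to make the localization to $\Omega'$ transparent. Set $\varphi=x_{n+1}$, the height function; since $\Sigma$ is minimal the coordinate function $\varphi$ is harmonic, so $\Delta_{\Sigma}\varphi=0$. The free-boundary condition $\nu=\bar N=\nabla F/|\nabla F|$ gives $\frac{\partial\varphi}{\partial\nu}=|\nabla F|^{-1}\langle E_{n+1},\nabla F\rangle=-2ff'|\nabla F|^{-1}\leq 0$ on $\partial\Sigma$. Integrating, $0=\int_{\Sigma}\Delta_{\Sigma}\varphi\,d\Sigma=\int_{\partial\Sigma}\frac{\partial\varphi}{\partial\nu}\,ds$, and since the integrand has constant sign it must vanish identically; thus $f'=0$ along $\partial\Sigma$. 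Now $\varphi$ is harmonic on $\Sigma$ with vanishing Neumann data, so the energy identity $\int_{\Sigma}|\nabla_{\Sigma}\varphi|^{2}\,d\Sigma=\int_{\partial\Sigma}\varphi\,\frac{\partial\varphi}{\partial\nu}\,ds-\int_{\Sigma}\varphi\,\Delta_{\Sigma}\varphi\,d\Sigma=0$ (equivalently, the maximum principle) forces $\varphi=x_{n+1}$ to be constant, say $x_{n+1}\equiv c$ on $\Sigma$.

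Finally I would identify the geometry. Since $\dim\Sigma=n$ equals the dimension of the hyperplane $\{x_{n+1}=c\}$ containing it, $\Sigma$ is an open domain of this affine hyperplane and is therefore flat. Its boundary satisfies $\partial\Sigma\subset\partial\Omega\cap\{x_{n+1}=c\}=\{|x|=f(c)\}$, a round $(n-1)$-sphere; as $\partial\Sigma$ is a closed $(n-1)$-manifold of the same dimension sitting inside this sphere, it is open and closed in it and hence fills it, so $\Sigma$ is the full flat disk $\{(x,c):|x|\leq f(c)\}$. Because $f'=0$ on $\partial\Sigma$, this disk sits at a critical height $f'(c)=0$, precisely where $\partial\Omega$ is vertical and a horizontal disk can meet it orthogonally. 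I expect the main points requiring care to be the localization to $\Omega'$ (justifying that the sign is only needed along $\partial\Sigma$) and this last identification, namely upgrading ``contained in the slice $\{x_{n+1}=c\}$'' to ``equal to the disk'' by dimension counting and connectedness; the analytic core is otherwise routine given Corollary~\ref{dF>0}.
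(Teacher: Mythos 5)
Your argument is correct, and its analytic core is precisely the mechanism the paper leaves implicit: the theorem is stated without a written proof, and your steps --- take the harmonic coordinate function $\varphi=x_{n+1}$, compute $\partial\varphi/\partial\nu=|\nabla F|^{-1}\langle E_{n+1},\nabla F\rangle=-2ff'\,|\nabla F|^{-1}\leq 0$ on $\partial\Sigma$ from the free-boundary condition, integrate $\Delta_\Sigma\varphi=0$ to force $f'=0$ along $\partial\Sigma$, then use the energy identity (or maximum principle) to conclude $x_{n+1}$ is constant --- are exactly the proof of Corollary~\ref{dF>0}(ii) and the Proposition preceding it, specialized to the axis direction $E_{n+1}$. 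You add two things of genuine value. First, Corollary~\ref{dF>0} as stated assumes a global sign on $\partial F/\partial x_i$, whereas here $f'\geq 0$ is only assumed on the piece $\Omega'$; your rerunning of the computation so that the sign is invoked only along $\partial\Sigma\subset\Omega'$ is the localization actually needed, and it is not spelled out in the paper. Second, you upgrade the corollary's conclusion ``$\Sigma$ lies in a plane'' to the stated conclusion ``$\Sigma$ is the flat disk with boundary on the sphere at a critical height $f'(c)=0$,'' via invariance of domain and connectedness of $\partial\Omega\cap\{x_{n+1}=c\}$; this identification is missing from the paper. By contrast, the route the section itself sets up --- confronting (\ref{eqgr1}) and (\ref{eqgr2}) to obtain $\int_{\partial\Sigma}\bigl(f(f')^{2}+yf'\bigr)\bigl(1+(f')^{2}\bigr)^{-1/2}\,ds=\int_{\Sigma}(N_{n+1}^{2}-1)[(f')^{2}+ff''+1]\,d\Sigma$ --- controls the right-hand side only under the extra hypothesis $f''\geq 0$ (and even then the term $yf'$ on the left is not sign-definite without further care), which is why the ``Note'' preceding the theorem carries that additional assumption. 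Your height-function argument is the cleaner and more general one for the theorem as actually stated; the only points worth making explicit are that $\Sigma$ is compact and connected, as assumed throughout the paper, and that the disk identification uses $n\geq 2$.
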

As a direct consequence, we obtain the following uniqueness and non-existence result.
\begin{theorem}
The only bounded smooth immersed minimal hypersurface with free boundary on a catenoid is the flat disk supported at the origin. There does not exist any bounded smooth immersed minimal hypersurface with free boundary on a cone.There does not exist any immersed minimal hypersurface with free boundary on a hyperbolic paraboloid.
\end{theorem}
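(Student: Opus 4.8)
The plan is to realise each of the three surfaces as the boundary of a domain of the type already treated and then read off the sign behaviour of the generating data so that either the previous theorem or Theorem \ref{bneq0} applies. The cone and the catenoid are surfaces of revolution, handled by the rotational identity of this section, whereas the hyperbolic paraboloid is a genuine quadric and is handled by the machinery of Section 2.

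For the cone I would take the generatrix $f(t)=mt$ with $m>0$ on an interval $I\subset(0,\infty)$, so that $f'\equiv m>0$ on all of $\Omega$. Applying the previous theorem with $\Omega'=\Omega$, any bounded minimal free-boundary $\Sigma$ would have to be a flat disk with $\partial\Sigma$ lying where $f'(t)=0$; since $f'$ never vanishes this is impossible, which is exactly the asserted non-existence (the same remark disposes of a circular paraboloid $f(t)=\sqrt t$, should one want it). For the hyperbolic paraboloid, which is \emph{not} a surface of revolution, I would instead write it as $F^{-1}(1)$ with $F(x)=x_1^2-x_2^2-x_n+c$, i.e. in the form (\ref{Ffull}) with $a_n=0$ and $b=-1\neq0$; then $\partial F/\partial x_n\equiv-1$ has constant sign and Theorem \ref{bneq0} (equivalently Corollary \ref{dF>0}) yields non-existence at once.

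The catenoid is the delicate case and the main obstacle: its generatrix $f(t)=\cosh t$ has $f'(t)=\sinh t$, which changes sign at the neck $t=0$, so the previous theorem, needing $f'\ge0$ on the piece containing $\Sigma$, cannot be applied directly to a $\Sigma$ straddling $\{y=0\}$. Here I would first localise $\Sigma$ by the maximum principle. Since $\Sigma$ is minimal, the height $y=\langle\,\cdot\,,E_{n+1}\rangle$ restricts to a harmonic function on $\Sigma$, so it attains its extrema on $\partial\Sigma$. Using $\nu=\bar N=\nabla F/|\nabla F|$ together with $\nabla F=(2x,-2ff')$, at a boundary point one finds
\[
\frac{\partial y}{\partial\nu}=\langle E_{n+1},\bar N\rangle=\frac{-2ff'}{|\nabla F|},
\]
whose sign is that of $-f'$. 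The Hopf lemma then forces $\partial y/\partial\nu\ge0$ at a boundary maximum, hence $f'\le0$ and $t_{\max}\le0$, and $\partial y/\partial\nu\le0$ at a boundary minimum, hence $f'\ge0$ and $t_{\min}\ge0$. Combined with $t_{\min}\le t_{\max}$ this squeezes $t_{\min}=t_{\max}=0$, so $\Sigma\subset\{y=0\}$; a minimal free-boundary hypersurface lying in the hyperplane through the neck must be the equatorial flat disk supported at the origin. This squeezing via the boundary maximum principle is where the real work lies, the cone and the hyperbolic paraboloid being immediate.
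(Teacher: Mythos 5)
Your proposal is correct, and for the catenoid --- the only delicate case --- it takes a genuinely different route from the one the paper intends. The paper writes no proof at all: the theorem is declared ``a direct consequence'' of the theorem immediately preceding it (that $f'\geq 0$ on the piece of $\Omega$ containing $\Sigma$ forces $\Sigma$ to be a flat disk with $\partial\Sigma$ at $f'=0$), which in turn rests on the section's integral identity
\[
\int_{\partial\Sigma}\frac{f(f')^{2}+yf'}{\sqrt{1+(f')^{2}}}\,ds=\int_{\Sigma}(N_{n+1}^{2}-1)\left[(f')^{2}+ff''+1\right]\,d\Sigma .
\]
For $f=\cosh$ that identity settles the catenoid globally with no localisation: the left integrand equals $(\cosh y\,\sinh^{2}y+y\sinh y)/\cosh y\geq 0$ because $y\sinh y\geq 0$, while the right integrand is $2(N_{n+1}^{2}-1)\cosh^{2}y\leq 0$, so both sides vanish, forcing $y=0$ on $\partial\Sigma$ and $N_{n+1}^{2}\equiv 1$ on $\Sigma$. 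You are right that the preceding theorem, as literally stated, cannot be applied to a $\Sigma$ straddling the neck, and your replacement --- harmonicity of the height function on a minimal hypersurface together with the sign of $\partial y/\partial\nu=-f'/\sqrt{1+(f')^{2}}$ at boundary extrema, which squeezes $t_{\min}=t_{\max}=0$ --- is a clean, purely local argument that is independent of Proposition \ref{intid} and of the (somewhat loosely stated) preceding theorem; its only cost is that it is tailored to the height function rather than flowing from the section's general identity. Your handling of the cone (the preceding theorem applies since $f'\equiv m>0$ never vanishes, so the required boundary locus $f'=0$ is empty) and of the hyperbolic paraboloid (Theorem \ref{bneq0} with $a_n=0\neq b$, since it is a quadric and not a surface of revolution) coincides with what the paper intends.
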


Note that there are no topological or symmetry restrictions on the minimal hypersurfaces on the result above.



\section{Free boundary surfaces in the ball}


Let $x:\Sigma\rightarrow\mathbb{B}^{3}$ be a free boundary embedding of a connected and oriented surface $\Sigma$ in the unitary ball $\mathbb{B}^{3}\subset\mathbb{R}^{3}$ centered at the origin. The orientability of $\Sigma$ let us choose a unitary normal field $N$ over all surface. As it is known, the Weingarten operator of $x$ is given by $A=-(\overline{\nabla}N)^T$, where $\overline{\nabla}$ is the connection of $\mathbb{R}^{3}$.

Putting $\text{dist}(\cdot, \cdot)$ as the intrinsic distance over $\Sigma$, we call a {\it $\varepsilon$-collar of $\partial\Sigma$} (or simply {\it collar}) the set $\Gamma_{\varepsilon}=\left\{p\in\Sigma; \text{dist}(p,\partial\Sigma)<\varepsilon\right\}$. 

Identifying $x$ with the position vector it is easy to see that the field $x^T$, which is the projection of $x$ over $T\Sigma$, is orthogonal to $\partial\Sigma$ because $\Sigma$ is free boundary. Besides, $x^T$ satisfies $|x^T|=|x|=1>0$ on each connected component $\gamma_i$ of $\partial\Sigma$. Therefore, the continuously of the field $x^T\in T\Sigma$ assures that exists a collar $\Gamma_i\subset\Sigma$ containing $\gamma_i$ whose
\begin{equation}
|x^T|   \geq \varepsilon_i>0 \ \ \ \text{on}\  \Gamma_i.
\label{eq:x>0}
\end{equation}
From now on we will assume that each collar in $\Sigma$ containing a connected component of $\partial\Sigma$ satisfies the condition (\ref{eq:x>0}).

In this section, two functions over $\Sigma$ are crucial:
$$g=\interno{x}{N}\ \ \ \ \text{and} \ \ \ \ \ \rho=\frac{1}{2}|x|^2.$$
As $\Sigma$ is free boundary on the sphere, $g\equiv0$ and $\rho\equiv1/2$ over the connected components of $\partial\Sigma$, that is, each connected component of $\partial\Sigma$ is level set for the functions  $g$ and $\rho$. In particular,
\begin{equation}\label{rhoandsigma}
\nabla g\perp\partial\Sigma \ \ \ \ \text{and}\ \ \ \ \ \nabla \rho\perp\partial\Sigma.
\end{equation}
In the sequel, for we establish the gradients of these functions, observe that, for any $X\in T\Sigma$,
$$
\begin{array}{rcl}
\interno{\nabla g}{X} &=& X \interno{x}{N} \\ 
                      &=&   \interno{\overline{\nabla}_Xx}{N} + \interno{x}{\overline{\nabla}_XN} \\
                      &=&   \interno{X}{N} + \interno{x}{-AX} \\ 
                      &=&   \interno{-Ax^T}{X}.
\end{array}
$$
Hence,  $\nabla g=-Ax^T$. On other hand,

$$
\begin{array}{rcl}
\interno{\nabla \rho}{X} &=& \frac{1}{2} X \interno{x}{x} \\ 
                         &=& \interno{\overline{\nabla}_Xx }{x} \\
                         &=& \interno{X}{x} \\
                         &=& \interno{x^T}{X}
\end{array}
$$
therefore, $\nabla \rho = x^T$.

In the next lemma we consider the operator defined for each $p\in\Sigma$ by
$$
\begin{array}{rll}
 J: T_p\R^3   &\longrightarrow& T_p\Sigma  \\ 
      V &\longmapsto&     J(V)=N\wedge V^T
\end{array}
$$
where $\wedge$ indicate the vectorial product operation in $\R^3$.

\begin{lemma}\label{lem1}
Let $x:\Sigma\rightarrow\R^3$  be a orientable surface and let $\omega=J(x)$. Then
$$\interno{\nabla_X\omega}{X}=\interno{x}{N}\interno{AX}{J(X)}, \ \ \forall\  X\in T\Sigma.$$
In particular, if $x^T$ is a principal direction of the Weingarten operator on a collar over a connected component of  $\partial\Sigma$, then
$$\interno{\nabla_{x^T}\omega}{x^T}=0.$$
\end{lemma}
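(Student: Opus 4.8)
The plan is to compute $\interno{\nabla_X\omega}{X}$ directly from the definition $\omega=J(x)=N\wedge x^T$, treating $\omega$ as a tangent vector field on $\Sigma$ and differentiating covariantly in the direction $X\in T\Sigma$. First I would write $\nabla_X\omega$ as the tangential part of the ambient derivative $\overline{\nabla}_X\omega$, since $\omega\in T\Sigma$ and the Levi-Civita connection of $\Sigma$ is the tangential projection of $\overline{\nabla}$. Applying the product rule for $\wedge$ to $\omega=N\wedge x^T$ gives
\[
\overline{\nabla}_X\omega=(\overline{\nabla}_X N)\wedge x^T + N\wedge(\overline{\nabla}_X x^T).
\]

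**Handling the two terms.**
For the first term I would use $\overline{\nabla}_X N=-AX$ (up to the sign convention $A=-(\overline{\nabla}\nu)^T$ fixed in the paper, with $N=\nu$), which lies in $T\Sigma$. Pairing with $X$ gives a contribution involving $\interno{(-AX)\wedge x^T}{X}$; the cyclic/scalar-triple-product structure of $\wedge$ should reorganize this into a term proportional to $\interno{AX}{J(X)}$ together with a piece that will cancel against the second term. For the second term, I would decompose $x=x^T+\interno{x}{N}N=x^T+gN$ (recalling $g=\interno{x}{\nu}$), so that $x^T=x-gN$ and $\overline{\nabla}_X x^T=\overline{\nabla}_X x-(Xg)N-g\,\overline{\nabla}_X N=X-(Xg)N+gAX$. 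Feeding this into $N\wedge(\overline{\nabla}_X x^T)$ and pairing with $X$, the terms $N\wedge X$ and $N\wedge N$ are orthogonal to $X$ or vanish, so only the $g\,AX$ piece survives, producing $g\,\interno{N\wedge AX}{X}=\interno{x}{\nu}\interno{AX}{J(X)}$ after using $\interno{N\wedge AX}{X}=\interno{AX}{X\wedge N}$ and the definition $J(X)=N\wedge X^T=N\wedge X$.

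**Expected main obstacle.**
The delicate point is the careful bookkeeping of the scalar triple products and the sign conventions: one must verify that the contribution from $(\overline{\nabla}_X N)\wedge x^T$ and the non-$g$ parts of the second term either vanish or cancel, leaving exactly the single term $\interno{x}{\nu}\interno{AX}{J(X)}$. I expect this cancellation to hinge on the antisymmetry of the triple product (terms of the form $\interno{V\wedge X}{X}=0$) and on $J(X)=N\wedge X$ being tangent, so that only the normal component $gN$ of $x$ contributes. Once the identity is established, the final claim is immediate: if $x^T$ is a principal direction along the collar, then $AX=\lambda x^T$ when $X=x^T$, so $\interno{A x^T}{J(x^T)}=\lambda\interno{x^T}{N\wedge x^T}=0$ because $N\wedge x^T\perp x^T$; hence $\interno{\nabla_{x^T}\omega}{x^T}=g\cdot 0=0$, regardless of the value of $g$.
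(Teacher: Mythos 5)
Your proposal is correct and follows essentially the same route as the paper: differentiate the cross product by the Leibniz rule, discard the tangent-wedge-tangent term because it is normal to $\Sigma$, use $\interno{N\wedge X}{X}=0$, so that only the contribution from the normal component $gN$ of $x$ survives, and finish by noting $J(x^T)\perp x^T$ in the principal-direction case. The one quibble is a sign: with $J(X)=N\wedge X$ one has $\interno{N\wedge AX}{X}=\interno{AX}{X\wedge N}=-\interno{AX}{J(X)}$, so your surviving term is really $-\interno{x}{\nu}\interno{AX}{J(X)}$; the paper has the same ambiguity (its proof computes with $\omega=x\wedge\nu=-J(x)$), and the sign is immaterial for the conclusion $\interno{\nabla_{x^T}\omega}{x^T}=0$.
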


\begin{proof}
Firstly we note that
$$
\begin{array}{rcl}
\overline{\nabla}_X\omega &=& \overline{\nabla}_X(N\wedge x) \\ 
                          &=& \overline{\nabla}_X N\wedge x+N\wedge\overline{\nabla}_Xx \\
                          &=& -AX\wedge x+N\wedge X \\   
                          &=& -AX\wedge(x^T+\interno{x}{N}N) -J(X)\\
                          &=& -AX\wedge x^T - \interno{x}{N} AX\wedge N -J(X)\\
                          &=& -AX\wedge x^T - \interno{x}{N} J(AX) -J(X). 
\end{array}
$$
Now, as $AX\wedge x^T$ is orthogonal to $\Sigma$ we have that

$$
\begin{array}{rcl}
\interno{\nabla_X\omega}{X} &=& \interno{\overline{\nabla}_X\omega}{X}\\
                            &=& -\interno{x}{N}\interno{J(AX)}{X}-\interno{J(X)}{X} \\ 
                            &=& \interno{x}{N}\interno{AX}{J(X)}.                          
\end{array}
$$
\end{proof}


In the next result, when it is said that $x^T$  is eigenvector (and consequently $\neq0$), we are assuming that the collar $\Gamma$ satisfies (\ref{eq:x>0}). There is no loss of generality in assuming this condition because whether it was not verified it would be true over the collar $\Gamma \cap \Gamma_i$. 

\begin{proposition}\label{pr1}
Let $\Sigma\subset\mathbb{B}^{3}$ be a embedded free boundary surface and let $\gamma\subset\partial\Sigma$ be a connected component of the boundary of $\Sigma$. Assume that exists
one collar $\Gamma\subset\Sigma$ over $\gamma$ whose the set  $\left\{x^T, Ax^T\right\}$ is linearly dependent. Then, the eigenvalue associated to $x^T$ is constant over $\gamma$.
\end{proposition}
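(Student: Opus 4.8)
The plan is to show that the hypothesis forces $x^T$ to be a principal direction of $A$ with eigenvalue $\lambda$, then to extract from \lemref{lem1} the vanishing of one connection coefficient of the principal frame, and finally to feed this into the Codazzi equation to kill the derivative of $\lambda$ in the direction tangent to $\gamma$. Notably, no minimality of $\Sigma$ will be required; the argument is purely a consequence of \lemref{lem1} together with Codazzi.

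First I would fix the collar $\Gamma$ over $\gamma$ and use the collar condition (\ref{eq:x>0}) together with the assumed linear dependence of $\{x^T, Ax^T\}$ to write $Ax^T=\lambda x^T$ on $\Gamma$ for a smooth function $\lambda$; this is the eigenvalue named in the statement. Set $e_1=x^T/|x^T|$ and $e_2=J(e_1)=\nu\wedge e_1$, which is an orthonormal frame of $T\Sigma$ on $\Gamma$ since $J$ preserves norms and rotates by a right angle in the tangent plane. Because $A$ is symmetric and $e_1$ is already an eigendirection, $e_2$ is automatically an eigendirection as well, say $Ae_2=\mu e_2$. I would also record the identity $\omega=J(x)=\nu\wedge x^T=|x^T|\,e_2$, since $x^T$ is already tangent.

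Next I would apply \lemref{lem1} with $X=e_1$. As $Ae_1=\lambda e_1$ is parallel to $e_1$ and hence orthogonal to $J(e_1)=e_2$, the right-hand side $\interno{x}{\nu}\interno{Ae_1}{J(e_1)}$ vanishes identically, so $\interno{\nabla_{e_1}\omega}{e_1}=0$ on $\Gamma$. Expanding $\omega=|x^T|e_2$ gives $\interno{\nabla_{e_1}\omega}{e_1}=|x^T|\,\interno{\nabla_{e_1}e_2}{e_1}$, and since $|x^T|>0$ on $\Gamma$ this yields the crucial vanishing connection coefficient $\interno{\nabla_{e_1}e_2}{e_1}=0$ throughout $\Gamma$.

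Finally I would invoke the Codazzi equation $(\nabla_{e_1}A)e_2=(\nabla_{e_2}A)e_1$ for the principal frame $\{e_1,e_2\}$. A short computation (which I would not spell out) shows that its $e_1$-component reads $e_2(\lambda)=\interno{\nabla_{e_1}e_2}{e_1}\,(\mu-\lambda)$, and combined with the previous step this forces $e_2(\lambda)=0$ on all of $\Gamma$. On $\gamma$ the free-boundary condition makes $x^T$, and hence $e_1$, the outward conormal (cf. (\ref{rhoandsigma})), so $e_2$ is tangent to $\gamma$; therefore the derivative of $\lambda$ along $\gamma$ vanishes and $\lambda$ is constant on the connected curve $\gamma$. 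The step I expect to be the main obstacle is the conceptual one: recognizing that the ``in particular'' assertion of \lemref{lem1}, once $\omega$ is identified with $|x^T|e_2$, is exactly the statement $\interno{\nabla_{e_1}e_2}{e_1}=0$, and that this is precisely the Codazzi coefficient controlling the tangential derivative $e_2(\lambda)$.
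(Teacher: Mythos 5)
Your proof is correct, and it reaches the conclusion by a genuinely different route from the paper's. The paper works with the support function $g=\interno{x}{\nu}$: it identifies $\lambda=\pm|\nabla g|$ on $\gamma$ (using $\nabla g=-Ax^T$ and $|x^T|=1$ there), and then shows $\omega|\nabla g|^2=0$ by exploiting the symmetry of $\mathrm{Hess}\,g$ to trade $\interno{\nabla_\omega\nabla g}{\nabla g}$ for $\interno{\nabla_{\nabla g}\nabla g}{\omega}$, after which the two resulting terms vanish --- one because $J(x^T)\perp Ax^T=\lambda x^T$ on the whole collar, the other by Lemma~\ref{lem1}. You instead set up the principal orthonormal frame $e_1=x^T/|x^T|$, $e_2=J(e_1)$ on $\Gamma$, read Lemma~\ref{lem1} as the vanishing of the connection coefficient $\interno{\nabla_{e_1}e_2}{e_1}$ (your expansion $\omega=|x^T|e_2$ and the positivity of $|x^T|$ on the collar are exactly right), and then invoke Codazzi to get $e_2(\lambda)=\interno{\nabla_{e_1}e_2}{e_1}(\mu-\lambda)=0$; I checked this frame computation and it is correct, as is the observation that $Ae_2=\mu e_2$ follows automatically from symmetry of $A$ in dimension two and that $e_2$ is tangent to $\gamma$ by the free-boundary condition. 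Both arguments use the same two essential inputs --- Lemma~\ref{lem1} and the fact that the eigenrelation holds on an open collar, not merely on $\gamma$, so that it can be differentiated --- and in fact the symmetry of the Hessian of the support function that the paper uses is a disguised form of the Codazzi information you use explicitly. What your version buys is transparency: it isolates the single connection coefficient that controls $e_2(\lambda)$ and makes visible that no minimality is needed (the paper's proof also never uses minimality, though it does not remark on this). What the paper's version buys is brevity: it avoids introducing the second eigenvalue $\mu$ and the frame formalism altogether.
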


\begin{proof}

Let $\lambda$ be the eigenvalue associated to $x^T$, i.e., $Ax^T=\lambda x^T$. We consider the functions $g$ e $\rho$. From (\ref{rhoandsigma}), we have that $\nabla g=\pm |\nabla g| x^T$ over $\partial\Sigma$. On other hand, we know that  $\nabla g=-Ax^T$ over $\partial\Sigma$. Then, 
$$\lambda=\interno{Ax^T}{x^T}=\interno{\pm|\nabla g|x^T}{x^T}=\pm |\nabla g|.$$
Therefore, in order to show that $\lambda$ is constant over $\partial\Sigma$ it is sufficient to check that
$$\omega|\nabla g|^2=0\ \ \ \ \ \text{over}\ \ \ \ \ \ \partial\Sigma,$$
where $\omega=J(x)=N\wedge x$ is the field in $T\Sigma$ which is tangent to $\partial\Sigma$. A straightforward calculation gives
$$
\begin{array}{rcl}
\omega|\nabla g|^2 &=& 2 \interno{\nabla_\omega\nabla g}{\nabla g} \\ 
                   &=& 2 \interno{\text{Hess}_g\omega}{\nabla g} \\
                   &=& 2 \interno{\omega}{\text{Hess}_g\nabla g} \\   
                   &=& 2 \interno{\omega}{\nabla_{\nabla g}\nabla g}\\
                   &=& 2 \nabla g\interno{\omega}{\nabla g}-2\interno{\nabla_{\nabla g}\omega}{\nabla g} \\
                   &=& 2 \nabla g\interno{J(x^T)}{-Ax^T}-2\interno{\nabla_{-Ax^T}\omega}{-Ax^T} \\
                   &=& 2 \nabla g\interno{J(x^T)}{-Ax^T}-2\lambda^2\interno{\nabla_{x^T}\omega}{x^T}.
\end{array}
$$
We note that both terms of the last equality are zero, the first one because the vectors   $J(x^T)$ and $Ax^T$ are orthogonal over each $\Gamma_i$, and the socond one because of the Lemma \ref{lem1}
\end{proof}



In the discussion that will came next,  we will need get informations from the  Weingarten operator of $\partial\Sigma$ seen as a submanifold of  $\Sigma$ and seen as a submanifold of $\mathbb{S}^2$. The dimension of  $\Sigma\subset\mathbb{B}^{n+1}\subset\R^{n+1}$ does not make any difference here. Then, we will assume for a while that $n\in\mathbb{N}$ is arbitrary.

For we be able to distinguish each Weingarten operator we will use the following notation: $A_{\partial \Sigma}^{\mathbb{S}^{n}}$ (for $\partial \Sigma^{n-1}\hookrightarrow\mathbb{S}^{n}$); $A_{\mathbb{S}^{n}}^{\R^{n+1}}$   (for $\mathbb{S}^{n}\hookrightarrow\mathbb{R}^{n+1}$) and $A_{\partial \Sigma}^{\Sigma}$ (for $\partial\Sigma^{n-1}\hookrightarrow\Sigma^{n}$).

Now, for each $X,Y\in T\partial\Sigma$,

\begin{eqnarray}
\nabla_X^{\R^{n+1}}Y &=& \nabla_X^{\s^{n}}Y+\interno{A_{\s^{n}}^{\R^{n+1}}X}{Y}x \label{ball1}\\ 
                     &=& \nabla_X^{\partial \Sigma}Y+\interno{A_{\partial \Sigma}^{\s^{n}}X}{Y}N-\interno{X}{Y}x. \nonumber
\end{eqnarray}

\begin{eqnarray}
\nabla_X^{\R^{n+1}}Y &=& \nabla_XY+\interno{AX}{Y}N \label{ball2}\\ 
                     &=& \nabla_X^{\partial \Sigma}Y+\interno{A_{\partial \Sigma}^{\Sigma}X}{Y}x+\interno{AX}{Y}N.\nonumber 
\end{eqnarray}

Therefore, $A=A_{\partial\Sigma}^{\s^n} \ \ \ \text{over} \ \ \partial\Sigma$. For each $p\in\partial \Sigma$ let $\{e_1, .., e_{n-1}\}\subset T_p\partial \Sigma$ be an  eigenvector orthonormal basis of $A_{\partial \Sigma}^{\mathbb{S}^{n}}$ associated to eigenvalues $\tau_1, ..., \tau_{n-1}$, respectively. The matrix of $A$ in the basis $\{e_1, .., e_{n-1}, x^T\}\subset T_p \Sigma$ is

\begin{equation}\label{matn} 
A=\left(
\begin{array}{ccccc}
\tau_1      &   0     & \cdots  &    0          & 0\\
   0        & \tau_2  & \cdots &    0          & 0\\
\vdots      & \vdots  & \ddots  & \vdots       &\vdots        \\
   0        &   0     & \cdots  & \tau_{n-1}   & 0\\   
   0        & 0       &\cdots   & 0            & \lambda 
\end{array}
\right)
\end{equation}

where $\lambda$ is the eigenvalue associated to the eigenvector $x^T$ (we still assuming $\Sigma\subset\mathbb{B}^{n+1}$ free boundary). In particular when $n=2$, the matrix of $A$ became
$$
A=\left(
\begin{array}{cc}
\tau    &  0 \\
   0    & \lambda 
\end{array}
\right),
\label{mat2} 
$$ 
where $\tau$ is the geodesic curvature of $\partial\Sigma$ seen as submanifold of  $\s^2$. Therefore,  the mean curvature of $\Sigma\subset\mathbb{B}^2$ is

\begin{equation}\label{media}
H=\frac{\tau+\lambda}{2}.
\end{equation}

\begin{theorem}[Theorem B]\label{pr2}
Under the same hypothesis of Proposition \ref{pr1}, if  $\Sigma$ is minimal then $\Sigma$ is either the equatorial disk $\mathbb{D}$ or the critical catenoid $\mathbb{K}$.
\end{theorem}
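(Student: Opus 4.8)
The plan is to combine the pointwise structure of the Weingarten operator along $\partial\Sigma$ (encoded in the diagonal matrix displayed just before the statement and in the mean-curvature identity (\ref{media})) with the rigidity supplied by Proposition \ref{pr1}, and then to reduce the problem to the already-known B\"orling-type classification of Kapouleas and Li. Throughout I specialize the discussion preceding the theorem to $n=2$, so that $\Sigma\subset\mathbb{B}^{3}$ and $\gamma$ is the distinguished boundary component carrying the collar $\Gamma$ on which $\{x^T,Ax^T\}$ is linearly dependent, i.e. the hypothesis of Proposition \ref{pr1}.

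First I would record what minimality buys along $\gamma$. In the adapted orthonormal frame $\{e_1,x^T\}$ of $T\Sigma$ along $\partial\Sigma$, the identity $A=A_{\partial\Sigma}^{\mathbb{S}^{2}}$ (obtained from (\ref{ball1}) and (\ref{ball2})) shows that $A$ is diagonal there, with entries $\tau$ and $\lambda$, where $\tau$ is the geodesic curvature of $\gamma$ seen as a curve in $\mathbb{S}^{2}$ and $\lambda$ is the eigenvalue attached to $x^T$. Since $\Sigma$ is minimal, $(\ref{media})$ forces $\tau=-\lambda$ along $\gamma$. Now Proposition \ref{pr1}, whose hypotheses are exactly those assumed here, guarantees that $\lambda$ is constant along $\gamma$; hence $\tau$ is constant along $\gamma$ as well.

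Next I would invoke the classical fact that a closed curve of constant geodesic curvature in the round sphere $\mathbb{S}^{2}$ is necessarily a round circle, that is, the intersection of $\mathbb{S}^{2}$ with an affine plane (a great circle when $\tau=0$, a small circle otherwise). Consequently $\gamma$ is invariant under the one-parameter group of rotations of $\mathbb{R}^{3}$ about the axis through the origin orthogonal to the plane containing $\gamma$; in other words, the boundary component $\gamma$ is rotationally invariant. At this point the hypothesis of the B\"orling-type uniqueness theorem of Kapouleas and Li (corollary 3.9 of \cite{kapouleas2017}, quoted in the introduction) is met: a free-boundary minimal surface in $\mathbb{B}^{3}$ possessing a rotationally invariant boundary component is congruent either to the equatorial disk $\mathbb{D}$ or to the critical catenoid $\mathbb{K}$, which is the desired conclusion.

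The genuinely delicate content is the assertion that $\tau$ equals the intrinsic geodesic curvature of $\gamma$ and is constant; this rests on $A=A_{\partial\Sigma}^{\mathbb{S}^{2}}$ along $\partial\Sigma$ together with Proposition \ref{pr1}, so the only real analytic difficulty has already been isolated in that proposition through Lemma \ref{lem1}. The remaining steps, namely passing from constant geodesic curvature to a round circle and from a round circle to rotational invariance, are classical, and the final identification of $\Sigma$ is a direct citation rather than a new argument. The main obstacle I anticipate is therefore purely one of bookkeeping: confirming that the linear-dependence hypothesis on $\{x^T,Ax^T\}$ used in Proposition \ref{pr1} is precisely the condition needed to make $x^T$ a principal direction along the collar, so that the diagonal form of $A$ and the identity $\tau=-\lambda$ are legitimately available on all of $\gamma$.
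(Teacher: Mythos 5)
Your proposal is correct and follows essentially the same route as the paper's own proof: Proposition \ref{pr1} gives that $\lambda$ is constant along $\gamma$, minimality together with (\ref{media}) forces the geodesic curvature $\tau=-\lambda$ of $\gamma\subset\mathbb{S}^{2}$ to be constant, so $\gamma$ is a round circle and hence rotationally invariant, and the conclusion then follows from Corollary 3.9 of \cite{kapouleas2017}. Your write-up merely makes explicit some steps the paper leaves implicit (the diagonalization of $A$ in the frame $\{e_1,x^T\}$ and the passage from constant geodesic curvature to a circle), so no further changes are needed.
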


\begin{proof}


We have shown in Proposition \ref{pr1} that $\lambda$ is constant over $\gamma$. Hence, as $\Sigma\subset\mathbb{B}^3$ is minimal, it follows from (\ref{media}) that the geodesic curvature $\tau$ of $\gamma\subset\s^2$ is also constant. Then, $\gamma$ is a circle over $\s^2$ and, thus, rotationally invariant. Therefore, the conclusion follows the same idea of Corollary 3.9 in \cite{kapouleas2017}.

\begin{remark}
A similar result can be obtained by considering general $n$. In fact, since $\Sigma$ has constant mean curvature and $\lambda$ is constant, we obtain from \eqref{matn} that the mean curvature of hypersurface $\Gamma\subset\partial\Sigma$ is also constant. Then, by classical Alexandrov's theorem (see, for example \cite{ros1987}), $\Gamma$ is a hypersphere. This produces the following result:
\vspace{0.5cm}
\begin{quote}
Let $\Sigma^n$ be a free-boundary, constant mean curvature hypersurface in $\mathbb{B}^{n+1}$. Suppose that the associated eigenvalue to $x^T$ by $A$ is constant along of a connected component $\Gamma\subset\partial\Sigma$. Then $\Gamma$ is a hypersphere of dimension $n-1$.  
\end{quote}
\vspace{0.5cm}
In order to prove the rigid version, as in Theorem \ref{pr2}, in this underlying context, it would be necessary analysing, for example, a higher dimensional version of \cite{kapouleas2017} and \cite{bjorling}.
\end{remark}


\end{proof}



\section{Gap results for free boundary surfaces in Ellipsoids}

The aim of this section is characterize free boundary minimal hypersurfaces $\Sigma$ on rotational ellipsoids (Theorem \ref{prop2}). For we being able to do this we will need some tools, mostly of then concerning the function $F$ below.           

Let $F:\R^3\rightarrow\R$ be the function
$$F(x_1,x_2,x_3)=\frac{x_1^2}{a^2}+\frac{x_2^2}{a^2}+\frac{x_3^2}{b^2}$$
with $a\geq b$. The level set $F^{-1}(1)$ is an ellipsoid of revolution with axes $2a$ and $2b$, whose revolution has been made over the $x_3$ axis.

A straightforward calculation gives the following expression to the gradient field of $F$ with respect to $\R^3$:
$$\overline{\nabla}F(x)=\frac{2}{a^2}\left[x+\left(\frac{a^2}{b^2}-1\right)\interno{x}{E_3}E_3\right]$$
where $E_3=(0,0,1)\in\R^3$. Now, in order to establish the Hessian of $F$ in $\Sigma$ we need the auxiliary functions on $\R^3$: $\phi=\frac{|x|^2}{2}$ and $\phi_3=\interno{x}{E_3}$. For all $X\in\Sigma$ we have
$$\text{Hess}_{\R^3}F(X,Y)=2\interno{X}{Y}+2\left(\frac{a^2}{b^2}-1\right)\interno{X}{E_3}\interno{Y}{E_3}$$
$$\text{Hess}_{\Sigma}F(X,Y)=\text{Hess}_{\R^3}F(X,Y)+\interno{AX}{Y}\interno{\overline{\nabla} F}{N}$$
$$\interno{\nabla\phi}{X}=X\phi=\interno{x}{X}=\interno{x^T}{X},$$
where $x^T\in T\Sigma$ is the field obtained from the projection of $x$ over $T_x\Sigma$. Therefore $\nabla\phi=x^T$ and using this we get   

\begin{eqnarray}
\text{Hess}_{\Sigma}\phi(X,Y) &=& \interno{\nabla_X\nabla\phi}{Y} \nonumber\\
                              &=& \interno{\nabla_Xx^T}{Y} \nonumber\\
                              &=& \interno{\overline{\nabla}_X(x-\interno{x}{N}N)}{Y}  \nonumber\\
                              &=& \interno{X}{Y}+\interno{AX}{Y}\interno{x}{N} \nonumber
\end{eqnarray}
for all $X,Y\in T\Sigma$. Similarly we have \ $\nabla\phi_3=E_3^T$\  and \ {\small $\text{Hess}_{\Sigma}\phi_3(X,Y)=\interno{AX}{Y}\interno{N}{E_3}$}. Hence, using this expressions, we get

\begin{eqnarray} 
\text{Hess}_{\Sigma}F(X,Y) &=& \interno{\nabla_X\nabla F}{Y} \nonumber \\
                           &=& \interno{\nabla_X(\overline{\nabla} F)^T}{Y} \nonumber \\
                           &=& \frac{2}{a^2} \interno{\nabla_X\left(x^T+\left(\frac{a^2}{b^2}-1\right)\interno{x}{E_3}E_3^T\right)}{Y}  \nonumber \\                    
                           &=& \frac{2}{a^2} \left[\text{Hess}_{\Sigma}\phi(X,Y)+\left(\frac{a^2}{b^2}-1\right)\interno{X}{E_3}\interno{Y}{E_3}\right. \nonumber \\                                                  && \left.+ \left(\frac{a^2}{b^2}-1\right)\interno{x}{E_3}\text{Hess}_\Sigma\phi_3(X,Y) \right] \nonumber \\                           
                           &=& \frac{2}{a^2} \left[\interno{X}{Y}+\interno{AX}{Y}\interno{x}{N} + \left(\frac{a^2}{b^2}-1\right)\interno{X}{E_3}\interno{Y}{E_3}\right. \nonumber \\                                 && \left. + \left(\frac{a^2}{b^2}-1\right)\interno{x}{E_3}\interno{AX}{Y}\interno{N}{E_3} \right] \nonumber \\                           
                           &=& \frac{2}{a^2} \left[ \interno{X}{Y}+\interno{AX}{Y}\left(\interno{x}{N}+\left(\frac{a^2}{b^2}-1\right)\interno{x}{E_3}\interno{N}{E_3}\right)\right. \nonumber \\                 && \left. + \left(\frac{a^2}{b^2}-1\right)\interno{X}{E_3}\interno{Y}{E_3}     \right] \nonumber \\     
                           &=& \frac{2}{a^2} \left[\interno{X}{Y}+\interno{AX}{Y}g(x)+\left(\frac{a^2}{b^2}-1\right)\interno{TX}{Y}\right] \nonumber  \\     
                           && \nonumber        
\end{eqnarray}
where $g:\Sigma\rightarrow\R$ and $T:T_x\Sigma\rightarrow T_x\Sigma$ are, respectively,
\begin{eqnarray} 
g(x) &=& \interno{x}{N}+\left(\frac{a^2}{b^2}-1\right)\interno{x}{E_3}\interno{N}{E_3}; \ \text{and} \nonumber \\
TX   &=&  \interno{X}{E_3^T}E_3^T \nonumber 
\end{eqnarray}
It is easy to check that $T$ is a self adjoint operator whose $E_3^T$ is an eigenvector associated with the eigenvalue  $|E_3^T|^2$. Besides, we can take any nonzero vector in $T\Sigma$, orthogonal to $E_3^T$, to verify that zero is also an eigenvalue of $T$. Therefore
\begin{equation}\label{T>0}
0\leq \interno{TX}{X}\leq |E_3^T|^2|X|^2, \ \ \forall X\in T_x\Sigma.
\end{equation}

The next two lemmas will be useful in the proof of Proposition \ref{prop2}.


\begin{lemma}\label{lemma1}
For each $x\in\Sigma$, the eigenvalues of $\frac{a^2}{2}$Hess$_\Sigma F (x)$ are greater of equal to
$$1-\frac{|A|(x)}{\sqrt 2}g(x)\ \ \ \text{and} \ \ \ 1+\frac{|A|(x)}{\sqrt 2}g(x).$$
\end{lemma}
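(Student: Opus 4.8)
The plan is to read the result directly off the expression for $\text{Hess}_\Sigma F$ established just above, by comparing symmetric bilinear forms. Setting $Y=X$ in that computation gives, for every $X\in T_x\Sigma$,
$$\frac{a^2}{2}\text{Hess}_\Sigma F(X,X)=|X|^2+g(x)\interno{AX}{X}+\left(\frac{a^2}{b^2}-1\right)\interno{TX}{X}.$$
The first move is to discard the last term. Since we assume $a\geq b$, the coefficient $\frac{a^2}{b^2}-1$ is nonnegative, and by (\ref{T>0}) the operator $T$ is positive semidefinite, so $\left(\frac{a^2}{b^2}-1\right)\interno{TX}{X}\geq 0$. Hence, as quadratic forms on $T_x\Sigma$,
$$\frac{a^2}{2}\text{Hess}_\Sigma F\geq I+g(x)A.$$

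Next I would invoke monotonicity of eigenvalues under the Loewner order (Weyl's inequality / Courant–Fischer): if $P\geq Q$ for symmetric operators, then each ordered eigenvalue of $P$ dominates the corresponding ordered eigenvalue of $Q$. It therefore suffices to compute the eigenvalues of $I+g(x)A$. Because $\Sigma$ is minimal, $\operatorname{tr}A=2H=0$, so the two principal curvatures are $\pm\kappa$ with $|A|^2=2\kappa^2$, i.e. $\kappa=|A|/\sqrt2$. Consequently the eigenvalues of $A$ are $\pm|A|(x)/\sqrt2$, and those of $I+g(x)A$ are exactly
$$1-\frac{|A|(x)}{\sqrt2}g(x)\quad\text{and}\quad 1+\frac{|A|(x)}{\sqrt2}g(x).$$
Combining this with the comparison of the preceding paragraph yields the claimed lower bounds.

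The steps are routine once the Hessian formula is available; the only points demanding care are (i) verifying the sign of $\frac{a^2}{b^2}-1$ from the standing hypothesis $a\geq b$ together with the semidefiniteness recorded in (\ref{T>0}), which is precisely what licenses dropping the $T$-term, and (ii) pairing the two ordered eigenvalues correctly in the monotonicity step so that each eigenvalue of $\frac{a^2}{2}\text{Hess}_\Sigma F$ is bounded below by the corresponding eigenvalue of $I+g(x)A$. I expect the main, though modest, obstacle to be phrasing the eigenvalue monotonicity cleanly, since $g(x)$ may change sign and one must keep track of which of $1\pm\frac{|A|}{\sqrt2}g$ is the larger; the Loewner-order comparison handles this uniformly and avoids any case analysis.
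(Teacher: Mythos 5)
Your proposal is correct and follows essentially the same route as the paper: drop the nonnegative $T$-term to reduce to the quadratic form $I+g(x)A$, whose eigenvalues are $1\pm\frac{|A|}{\sqrt2}g(x)$ by minimality. Your explicit appeal to eigenvalue monotonicity under the Loewner order is in fact slightly more careful than the paper's write-up, which states the comparison as a bilinear-form inequality without spelling out the Courant--Fischer step.
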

\begin{proof}
It follows from (\ref{T>0}) that
\begin{eqnarray}
\frac{a^2}{2}\text{Hess}_\Sigma F(X,Y)  &=&  \interno{X}{Y}+\interno{AX}{Y}g(x)+\left(\frac{a^2}{b^2}-1\right)\interno{TX}{Y}  \nonumber \\
                                      &\geq &  \interno{X+g(x)AX}{Y}. \nonumber
\end{eqnarray}
But, the eigenvalues of \ \ $X\mapsto X+g(x)AX$\ \ are
$$1+\lambda_1g(x)\ \ \ \text{and}\ \ \ 1+\lambda_2g(x),$$
where $\lambda_1\leq\lambda_2$ are the eigenvalues of $A$. The lemma now follows from the fact that $\Sigma$ minimal imply
$$\lambda_1=-\frac{|A|(x)}{\sqrt2}\ \ \ \text{and}\ \ \ \lambda_2=\frac{|A|(x)}{\sqrt2}.$$
\end{proof}


\begin{lemma}\label{kappa>0}
The Weingarten operator $A_{\partial\Omega}^{\R^3}$ of the ellipsoid $F^{-1}(1)=\partial\Omega$ in $\R^3$ satisfies 
$$\interno{A_{\partial\Omega}^{\R^3}X}{X}\geq c |X|^2>0, \ \ \forall X\in T\partial\Omega,\ X\neq0,$$
where $c>0$.
\end{lemma}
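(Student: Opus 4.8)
The plan is to recognize the lemma as the statement that the ellipsoid is \emph{uniformly convex}, and to reduce it to the positive-definiteness of $\text{Hess}_{\R^3}F$ (already computed in this section) together with a compactness bound. For a regular level set $\partial\Omega=F^{-1}(1)$ with normal $\bar N=\nabla F/|\nabla F|$, the Weingarten operator acts on tangent vectors $X,Y\in T\partial\Omega$ by
$$\interno{A_{\partial\Omega}^{\R^3}X}{Y}=\frac{1}{|\nabla F|}\,\text{Hess}_{\R^3}F(X,Y),$$
where the orientation has been fixed so that the outward-convex ellipsoid has positive curvature. First I would justify this formula: differentiating $\bar N=|\nabla F|^{-1}\nabla F$ in the direction $X$ produces, besides $|\nabla F|^{-1}\overline{\nabla}_X\nabla F$, a term proportional to $\nabla F$ itself; since $\nabla F$ is normal to $\partial\Omega$, it is orthogonal to $Y\in T\partial\Omega$ and drops out when we pair with $Y$. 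Thus the whole lemma becomes a statement about the restriction of $\text{Hess}_{\R^3}F$ to the tangent hyperplane, rescaled by $|\nabla F|^{-1}$.

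Second, I would use the explicit Hessian. From the expression for $\overline{\nabla}F$ displayed above one gets $\text{Hess}_{\R^3}F=\operatorname{diag}\!\big(2/a^2,\,2/a^2,\,2/b^2\big)$, a positive-definite symmetric form. Because $a\geq b>0$, its smallest eigenvalue is $2/a^2>0$. By the Rayleigh-quotient characterisation, the restriction of a positive-definite form to any subspace has all eigenvalues bounded below by the smallest eigenvalue of the ambient form, so for every $x\in\partial\Omega$ and every $X\in T_x\partial\Omega$,
$$\text{Hess}_{\R^3}F(X,X)\geq \frac{2}{a^2}\,|X|^2.$$

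Third, I would control $|\nabla F|$ from above. Since $\partial\Omega$ is compact and $1$ is a regular value, the continuous function $|\nabla F|$ is finite and nowhere zero on $\partial\Omega$, hence attains a finite positive maximum $M_0=\max_{\partial\Omega}|\nabla F|$. Combining the two estimates yields
$$\interno{A_{\partial\Omega}^{\R^3}X}{X}=\frac{1}{|\nabla F|}\,\text{Hess}_{\R^3}F(X,X)\geq \frac{2}{a^2 M_0}\,|X|^2,$$
so the lemma holds with $c=2/(a^2M_0)>0$.

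There is no deep obstacle here; the only points that require genuine care are fixing the orientation of $A_{\partial\Omega}^{\R^3}$ so that the convex ellipsoid yields a \emph{positive} rather than negative form, and confirming that the lower eigenvalue bound is inherited by the restriction to the tangent hyperplane (so that the normal direction, which carries the potentially larger eigenvalue $2/b^2$, plays no special role). The compactness argument giving $M_0<\infty$ is routine.
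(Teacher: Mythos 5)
Your argument is correct, but it follows a genuinely different route from the paper's. The paper parametrizes the ellipsoid of revolution explicitly, computes the Gaussian curvature $K>0$, deduces that the two principal curvatures $\kappa_1\leq\kappa_2$ share a sign, argues that the mean curvature is positive so both are positive, and finally invokes compactness to set $c=\min_{\partial\Omega}\kappa_1>0$; this is intrinsically two-dimensional (it leans on $K=\kappa_1\kappa_2$ and $H=(\kappa_1+\kappa_2)/2$) and yields no explicit constant. You instead use the level-set identity $\interno{A_{\partial\Omega}^{\R^3}X}{Y}=|\nabla F|^{-1}\,\mathrm{Hess}_{\R^3}F(X,Y)$ on tangent vectors, the fact that $\mathrm{Hess}_{\R^3}F=\mathrm{diag}(2/a^2,2/a^2,2/b^2)$ is a constant positive-definite form whose smallest eigenvalue $2/a^2$ is inherited by every restriction to a subspace, and an upper bound on $|\nabla F|$. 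This buys two things: an explicit constant (in fact $|\nabla F|^2=4x_1^2/a^4+4x_2^2/a^4+4x_3^2/b^4\leq 4/b^2$ on $\partial\Omega$ because $b\leq a$, so one may take $c=b/a^2$ and dispense with compactness altogether), and an argument that generalizes verbatim to convex quadrics in $\R^{n+1}$, which is relevant to the paper's closing remark on higher-dimensional ellipsoids. The one point you rightly flag --- fixing the orientation of $A_{\partial\Omega}^{\R^3}$ so that the convex ellipsoid is positively curved --- is also the delicate point in the paper's own proof (its step showing $H>0$ is really the same orientation choice), and your formulation makes that choice explicit rather than deducing it.
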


\begin{proof}
We claim that both eigenvalues $\kappa_1\leq\kappa_2$ of $A_{\partial\Omega}^{\R^3}$ are positive. Let $U\subset\R^2$ be an open set and $x:U\subset\R^2\rightarrow V\subset\partial\Omega$ the immersion
$$x(\theta,t)=(a\cos t\cos \theta, a\cos t\sin \theta, b\sin t),\ \ (\theta, t)\in U.$$
A straightforward calculation shows that the Gaussian curvature of $\partial\Omega$ at $x(\theta, t)$ is
$$K(\theta,t)=\frac{b^2}{(a^2\sin^2t+b^2\cos^2t)^2}.$$  
Hence, $K$ is strictly positive on $\partial\Omega$. In particular, $\kappa_1$ and $\kappa_2$ have the same sign. Let $H$ be the mean curvature of $\partial\Omega\subset\R^3$. It is well known that
$$\kappa_1=H-\sqrt{H^2-K}\ \ \ \ \text{and}\ \ \ \ \kappa_2=H+\sqrt{H^2-K}.$$
We claim that $H>0$. Indeed, if $H<0$ we will have $\kappa_1<0$ and
$$\kappa_2=H+\sqrt{H^2-K}>H+|H|=0$$
which gives a contradiction. Hence $H>0$ and, once $\kappa_2=H+\sqrt{H^2-K}>0$, we have $\kappa_1>0$ either. 

Now, once $\kappa_1$ is a continuous function on the compact set $\partial\Omega$, we have that $c=\min_{x\in\partial\Omega}\kappa_1(x)>0$. Therefore, for all  $X\in T\partial\Omega$ with $X\neq0$,
$$\interno{A_{\partial\Omega}^{\R^3}X}{X}\geq \kappa_1|X|^2\geq c |X|^2>0.$$
\end{proof}


\begin{theorem}[Theorem C]\label{prop2} \label{elipsoide}
Let $\Sigma^2$ be a compact minimal surface with free boundary in the ellipsoid $F^{-1}(1)$. If $|A|^2g(x)^2\leq2$ on $\Sigma$, then $\Sigma$ is homeomorphic to a disk or an annulus.
\end{theorem}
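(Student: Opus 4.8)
The plan is to convert the pointwise pinching $|A|^2g(x)^2\le 2$ into the single geometric statement that $F|_\Sigma$ is a geodesically convex function, and then to read the topology of $\Sigma$ directly off the structure of the minimum set of that convex function. The first step is immediate from Lemma~\ref{lemma1}: the hypothesis is exactly the condition $\left|\frac{|A|(x)}{\sqrt2}g(x)\right|\le 1$, so both lower bounds $1-\frac{|A|(x)}{\sqrt2}g(x)$ and $1+\frac{|A|(x)}{\sqrt2}g(x)$ produced by that lemma are nonnegative. Hence every eigenvalue of $\frac{a^2}{2}\mathrm{Hess}_\Sigma F$ is nonnegative, and since $a^2>0$ this forces $\mathrm{Hess}_\Sigma F\ge 0$ everywhere on $\Sigma$; i.e. $F|_\Sigma$ is convex along geodesics.

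Next I would record the boundary behavior. Since $1$ is a regular value of $F$ we have $|\nabla F|>0$ along $\partial\Sigma$, and the free-boundary condition $\nu=\nabla F/|\nabla F|$ gives $\interno{\nabla_\Sigma F}{\nu}=|\nabla F|>0$ there. Thus $F$ strictly increases in the outward conormal direction: its minimum over the compact surface $\Sigma$ is attained only at interior points, the downward field $-\nabla F$ points strictly inward along $\partial\Sigma$, and consequently the flow of $-\nabla F$ is forward complete and never escapes $\Sigma$ through the boundary.

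The core of the argument is then a convexity/deformation-retraction step, in the spirit of the gap theorem of Ambrozio–Nunes. Because $F|_\Sigma$ is convex, each critical point is a global minimum (along any minimizing geodesic issuing from a critical point the restriction of $F$ is convex with vanishing initial derivative, hence nondecreasing), so the $\omega$-limit set of the downward flow lies in the minimum set $M=F^{-1}(\min_\Sigma F)$, producing a strong deformation retraction of $\Sigma$ onto $M$. Moreover $M$ is totally convex: if $p,q\in M$ and $\gamma$ is a minimizing geodesic joining them, then $F\circ\gamma$ is convex and equals $\min_\Sigma F$ at both endpoints, hence is constant, so $\gamma\subset M$. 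By the structure theory of totally convex subsets, a compact connected such set in a surface is either contractible (a point, a geodesic segment, or a two-dimensional convex region) or a single closed geodesic, homotopy equivalent to $S^1$. Therefore $\Sigma$ is homotopy equivalent to a point or to $S^1$, and as a compact orientable surface with nonempty boundary it must be a disk or an annulus respectively.

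I expect the main obstacle to be the rigorous justification of this last step rather than the convexity estimate: one must argue that the downward gradient flow genuinely converges to a strong deformation retraction onto $M$ (and does not merely accumulate on scattered critical points), and one must pin down the admissible homotopy types of the totally convex minimum set on a surface with boundary. Orientability enters only at the very end, to exclude the M\"obius band; and the convexity of the ambient ellipsoid established in Lemma~\ref{kappa>0} can be brought in to control the boundary geometry should one prefer a Gauss–Bonnet bookkeeping as an alternative to the retraction argument.
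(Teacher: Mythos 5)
Your first half coincides with the paper's proof: Lemma \ref{lemma1} converts the pinching $|A|^2g^2\le 2$ into $\mathrm{Hess}_\Sigma F\ge 0$, and the minimum set $\mathcal{C}$ of $F|_\Sigma$ is then totally convex. From there you diverge. The paper argues directly with geodesic loops in nontrivial homotopy classes: if $\mathcal{C}$ is a point, any such loop is forced into $\mathcal{C}$, so $\Sigma$ is a disk; otherwise a geodesic loop representing a nontrivial class lies in $\mathcal{C}$, must close up smoothly (a corner could be smoothed to produce an open subset of $\mathcal{C}$, impossible for a \emph{minimal} surface, since $\mathrm{Hess}_\Sigma F\equiv 0$ on an open set forces $\interno{AX}{X}g(x)\le -|X|^2$ for all $X$, contradicting $\mathrm{tr}\,A=0$), and one concludes $\mathcal{C}$ is a simple closed geodesic and $\pi_1(\Sigma)=\Z$. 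You instead propose a negative-gradient-flow retraction onto $\mathcal{C}$ together with the Cheeger--Gromoll structure theory of totally convex sets.

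Two genuine gaps in your route. First, you treat Lemma \ref{kappa>0} as optional, but the convexity of $\partial\Sigma$ in $\Sigma$ (the paper shows $A^{\Sigma}_{\partial\Sigma}=A^{\R^3}_{\partial\Omega}$, hence $k_g\ge c>0$) is needed in \emph{both} approaches: your observation that $\interno{\nabla_\Sigma F}{\nu}=|\nabla F|>0$ along $\partial\Sigma$ only places $\mathcal{C}$ in the interior; it does not prevent a minimizing curve between two points of $\mathcal{C}$ from running along $\partial\Sigma$, where it is not a geodesic and the inequality $(F\circ\gamma)''\ge 0$ is unavailable. Without boundary convexity, the total convexity of $\mathcal{C}$ and your claim that every critical point of $F|_\Sigma$ is a global minimum are both unproved. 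Second, the step you yourself flag as the main obstacle --- that the flow yields a strong deformation retraction onto $\mathcal{C}$ and that a compact totally convex subset of a surface is contractible or circle-like --- is the entire content of the topological conclusion and is left to unstated ``structure theory''; in particular, excluding (or taming) the case where $\mathcal{C}$ has nonempty interior requires either the soul construction or, as in the paper, the minimality of $\Sigma$, which your argument never invokes after Lemma \ref{lemma1}. The strategy can probably be completed, but as written the disk-or-annulus dichotomy is deferred rather than proved.
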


\begin{proof}


The proof follows the same steps of \cite{ambrozio2016}. First, we define
$$\mathcal{C}=\left\{p\in\Sigma;\  F(p)=\min_{x\in\Sigma}F(x)\right\}.$$
Given $p, q\in\mathcal{C}$, let $\gamma:[0,1]\rightarrow\Sigma$ be a geodesic such that $\gamma(0)=p$ and $\gamma(1)=q$. It follows from Lemma \ref{lemma1} that  the condition $|A|^2g(x)^2\leq2$ is equivalent to Hess$_\Sigma F\geq0$ on $\Sigma$. Then, $(F\circ \gamma)''(t)\geq0$ for all $t\in[0,1]$. Hence, $F\circ \gamma$, whose minima are attain at $t=0$ and $t=1$, is convex on $[0,1]$. We conclude that $(F\circ\gamma)(t)\equiv\min_\Sigma F$. Therefore, $\gamma([0,1])\subset\mathcal{C}$ and $\mathcal{C}$ must be a totally convex subset of $\Sigma$. In particular, totally convex property of $C$ also assures that $\gamma([0,1])\subset C$ for all geodesic loop $\gamma:[0,1]\rightarrow\Sigma$, based at a point $p\in C$.

Now we claim that the geodesic curvature $k_g$ of $\partial\Sigma$ in $\Sigma$ is positive. In fact,  if we do a similar decomposition as in equations (\ref{ball1}) and (\ref{ball2})  we will have  $A_{\partial\Sigma}^\Sigma=A_{\partial\Omega}^{\R^3}$ on $\partial\Sigma$, where $\partial\Omega$ is the ellipsoid $F^{-1}(1)$. Hence, if $X\in T\partial \Sigma$ is unitary, it follows from Lemma \ref{kappa>0} that 
\begin{eqnarray}
k_g &=& \interno{\overline{\nabla}_XX}{\nu} = X\interno{X}{\nu}-\interno{X}{\overline{\nabla}_X\nu} = \interno{A_{\partial\Sigma}^\Sigma X}{X} \nonumber \\ 
    &=&  \interno{A_{\partial\Omega}^{\R^3} X}{X}\geq c>0. \nonumber
\end{eqnarray}

This assures that each geodesic $\gamma$ which connect two points in $\mathcal{C}$ is completely inside of $\Sigma$, that is, the trace of $\gamma$ does not have points of $\partial\Sigma$. Hence, $\mathcal{C}$ is contained in the interior of $\Sigma$. 

If $C$ consists of a singular point $p\in\Sigma\backslash \partial\Sigma $ and there is a non-trivial homotopy class $[\alpha]\in\pi_1(\Sigma,p)$, we can find a geodesic loop $\gamma:[0,1]\rightarrow\Sigma, \gamma(0)=\gamma(1)=p$ with $\gamma\in[\alpha]$. But, since $C$ is totally geodesic, $\gamma([0,1])\subset C$ and, in particular, $C$ has more than one point, which is a contradiction. Therefore, if $C$ contains only a single point, $\Sigma$ is homeomorphic to a disk.

If $C$ has more than one point and it is not homeomorphic to a disk, we can find a  geodesic loop $\gamma:[0,1]\rightarrow\Sigma, \gamma(0)=\gamma(1)$, belonging to a non-trivial homotopy class $[\alpha]\in\pi_1(\Sigma,p)$. In this case we must have $\gamma'(0)=\gamma'(1)$. Indeed, if $\gamma'(0)\neq\gamma'(1)$ the totally  convexity of $C$ together with $\gamma([0,1])\subset C$ assure that it is possible to join points on $\gamma$ near the break at $p$ by minimising geodesics and find an open set $U\subset C$ which is a contradiction with the hypothesis of $\Sigma$ being minimal. By similar arguments we can also show that $\gamma([0,1])$ is a simple curve and that $C=\gamma([0,1])$. We conclude that $\pi_1(\Sigma,p)=\mathbb{Z}$. Then, $\Sigma$ is  homeomorphic to an annulus.


\end{proof}

\begin{remark}
After the conclusion of this work, we take note of \cite{seo}, where the authors address topological gap results where the ambient space is a strictly convex domain in a 3-dimensional Riemannian manifold with sectional curvature bounded above by a constant.
\end{remark}

\begin{remark}
We observe that similar techniques can be applied to approach more general results of free boundary constant mean curvature submanifolds (of arbritary codimension) immersed in higher dimensional ellipsoids.
\end{remark}


By the end, we also present a gap result in the unit ball.

\begin{theorem}[Theorem D] \label{cherndocarmokobayashi}Let $\Sigma^n$ be a free boundary minimal hypersurface in  the  unit ball $\mathbb{B}^{n+1}$. If $|A|^2\leq 2n$,  then $\Sigma$ is a totally geodesic equatorial disk $\mathbb{D}^n$.
\end{theorem}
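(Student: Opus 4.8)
My plan is to integrate a Simons-type identity for $|A|^2$ and to show that the free boundary condition forces the resulting boundary term to have a favourable sign, so that the pinching $|A|^2\le 2n$ collapses everything to the totally geodesic case. Since $\Sigma^n$ is minimal in the flat ambient $\R^{n+1}$, the Gauss and Codazzi equations give the Simons identity $\tfrac12\Delta_\Sigma|A|^2=|\nabla A|^2-|A|^4$, with $\nabla A$ totally symmetric. Integrating over the compact $\Sigma$ and applying the divergence theorem,
\[
\int_\Sigma\bigl(|\nabla A|^2-|A|^4\bigr)\,d\Sigma=\frac12\int_{\partial\Sigma}\frac{\partial|A|^2}{\partial\nu}\,ds ,
\]
where $\nu$ is the outward conormal; the whole argument then reduces to understanding this boundary integrand.

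The heart of the proof, and the step I expect to be the main obstacle, is the evaluation of $\partial_\nu|A|^2$ on $\partial\Sigma$. The free boundary condition says $\nu=x$ and $\interno{N}{x}=0$ along $\partial\Sigma$; differentiating $\interno{N}{x}=0$ in a direction $X\in T\partial\Sigma$ and using $\overline{\nabla}_Xx=X$ yields $\interno{AX}{\nu}=0$, so $\nu$ is a principal direction of $A$ at the boundary and $A$ leaves $T\partial\Sigma$ invariant. Writing $A$ in a principal frame $\{e_1,\dots,e_{n-1},\nu\}$ with $Ae_i=\kappa_i e_i$ and $\interno{A\nu}{\nu}=\lambda$ (so $\sum_i\kappa_i+\lambda=0$ by minimality), I would differentiate the relation $\interno{AX}{\nu}=0$ tangentially, invoke the full symmetry of $\nabla A$ coming from Codazzi, and use that the shape operator of $\partial\Sigma$ in $\Sigma$ is the identity (again from $\overline{\nabla}_Xx=X$, as already seen in the ellipsoid computation). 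This should give $(\nabla_\nu A)(e_i,e_i)=\lambda-\kappa_i$ and, by tracelessness of $\nabla_\nu A$, $(\nabla_\nu A)(\nu,\nu)=-n\lambda$, whence
\[
\frac{\partial|A|^2}{\partial\nu}=2\sum_{i=1}^{n-1}\kappa_i(\lambda-\kappa_i)-2n\lambda^2=-2|A|^2-2n\lambda^2\le 0\qquad\text{on }\partial\Sigma .
\]

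Substituting this back produces the clean identity
\[
\int_\Sigma|A|^4\,d\Sigma=\int_\Sigma|\nabla A|^2\,d\Sigma+\int_{\partial\Sigma}\bigl(|A|^2+n\lambda^2\bigr)\,ds ,
\]
in which the sign of the boundary term plays the role that the ambient curvature plays in the closed spherical Chern--do Carmo--Kobayashi theorem. I would then bring in the hypothesis $|A|^2\le 2n$, which is exactly the borderline that makes this flat-ambient version rigid: combined with the identity above and a Kato-type control of $|\nabla A|^2$, the pinching is expected to force $|\nabla A|\equiv 0$ together with the vanishing of the boundary integrand, hence $|A|\equiv 0$. A totally geodesic compact hypersurface meeting $\mathbb S^n$ orthogonally is a hyperplane through the origin, i.e. the equatorial disk $\mathbb D^n$, which is the desired conclusion. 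The two delicate points are the correct sign and symmetrisation in the boundary computation and the sharp use of the constant $2n$ when closing the estimate.
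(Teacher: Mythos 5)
Your boundary computation is correct: differentiating $\langle N,x\rangle=0$ along $\partial\Sigma$ shows that $\nu=x^{T}$ is a principal direction, the shape operator of $\partial\Sigma$ in $\Sigma$ is the identity, and the Codazzi symmetry of $\nabla A$ then gives $(\nabla_{\nu}A)(e_i,e_i)=\lambda-\kappa_i$, $(\nabla_{\nu}A)(\nu,\nu)=-n\lambda$, hence $\partial_{\nu}|A|^{2}=-2|A|^{2}-2n\lambda^{2}\le0$ on $\partial\Sigma$. The genuine gap is in the final step. Integrating the flat Simons identity you arrive at
\[
\int_{\Sigma}|A|^{4}\,d\Sigma=\int_{\Sigma}|\nabla A|^{2}\,d\Sigma+\int_{\partial\Sigma}\bigl(|A|^{2}+n\lambda^{2}\bigr)\,ds,
\]
which points in the wrong direction for a gap theorem: every term on the right is nonnegative, so the identity only asserts $\int_{\Sigma}|A|^{4}\ge\int_{\Sigma}|\nabla A|^{2}$, and this is perfectly compatible with $A\not\equiv0$. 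In the spherical Chern--do Carmo--Kobayashi theorem the rigidity comes from the ambient curvature contribution $+n|A|^{2}$ in Simons' equation, which yields $0=\int|\nabla A|^{2}+\int|A|^{2}(n-|A|^{2})$ and lets the pinching annihilate every term; in the flat ambient that term is absent, and no Kato-type inequality converts $\int|\nabla A|^{2}$ into something dominating $\int|A|^{4}$. As written, the hypothesis $|A|^{2}\le2n$ never interacts with your identity in a way that forces $A\equiv0$, so the argument does not close.

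For comparison, the paper proceeds entirely differently: it applies the Minkowski-type identity of Proposition \ref{intid} with $F(x)=|x|^{2}$, namely $\int_{\partial\Sigma}\varphi=n\int_{\Sigma}\varphi+\frac{1}{2}\int_{\Sigma}(1-|x|^{2})\Delta\varphi$, to the support function $g=\langle x,N\rangle$, which satisfies the Jacobi equation $\Delta g+|A|^{2}g=0$ and vanishes on $\partial\Sigma$ by the free boundary condition. A first-eigenvalue argument for the Jacobi operator $J=-\Delta-|A|^{2}$ excludes a sign change of $g$ when $|A|^{2}\le2n$, and then the same identity applied to $g$ itself, whose coefficient $n-\frac{1}{2}|A|^{2}$ is nonnegative exactly under the pinching, forces $g\equiv0$; hence $\Sigma$ is a cone through the origin and therefore the equatorial disk. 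That is where the constant $2n$ actually does its work, and it is the ingredient your proposal is missing.
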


\begin{proof}
Firstly, we observe that to recover the case 
of $(n+1)$-dimensional unit ball domain, we can get the function $F(x)=|x|^{2}$ in the formula \ref{fund} to obtain

\begin{equation}\label{firstconclusion}
\int_{\partial \Sigma} \varphi  =n \int_\Sigma \varphi +\frac{1}{2} \int_\Sigma (1-|x|^2) \Delta\varphi,
\end{equation}

It is well known that the support function $g=\langle x, N \rangle$
satisfies the following equation 
\[ \Delta g + |A|^2 g =0,
\]
Defining $J:=-\Delta - |A|^2$, called the Jacobi operator, $g$ is an eigenfunction associated to the eigenvalue $0$. If $g$ is a sign-changing function, then $0$ is not the first eigenvalue. So $\lambda_1^J<0$. Let us denote by $v$ the eigenfunction associated to $\lambda_1^J$. Plugging $v$ into \eqref{firstconclusion}, we have
\begin{equation}
\begin{array}{rcl}
\int_{\partial \Sigma} v & = &n \int_\Sigma v + \frac{1}{2}\int_\Sigma (1-|x|^2) \Delta v \\ \\
0 &=& n \int_\Sigma v + \frac{1}{2} \int_\Sigma (1-|x|^2)  \left(-|A|^2 v- \lambda_1^J v \right)\\ \\
&=& \int_\Sigma \left(2n-|A|^2\right)v + \int_\Sigma |x|^2 |A|^2 v- \int_\Sigma (1-|x|^2)\lambda_1^J v 
\end{array}
\end{equation}
This is impossible if $|A|^2\leq 2n$. We conclude that $g$ is nonnegative.
Now plugging $g$ into \eqref{firstconclusion},

\begin{equation}
\begin{array}{rcl}
\int_{\partial \Sigma} g & = &n \int_\Sigma g+ \frac{1}{2}\int_\Sigma (1-|x|^2) \Delta g \\ \\
&=& \int_\Sigma \left(n-\frac{1}{2}|A|^2\right)g + \frac{1}{2}\int_\Sigma |x|^2 |A|^2 g.
\end{array}
\end{equation}
Since $\Sigma$ is free boundary, we conclude that $g\equiv 0$ on the boundary, therefore if $|A|^2 \leq 2n$ then $g=0$ a.e., the continuity of $g$ on the other hand implies $g\equiv 0 $ in $\Sigma$. Therefore, $\Sigma$ is a minimal cone, hence by regularity  $\Sigma$ is a totally geodesic hypersurface.
\end{proof}

\begin{remark}
We mention that the gap of previous theorems is actually stronger, for $n< 5$, than an earlier result of the
first author with his collaborators  (see Theorem 1.1 in \cite{rosivaldo}, where the gap is $|A|^2\leq \frac{n^2}{2}$).
\end{remark}

\section*{Acknowledgments}
This work was partially funded by Conselho Nacional de Desenvolvimento Científico e Tecnológico (CNPq) Grants 312598/2018-1 (E. Barbosa), 316080/2021-7 (A. Freitas) and 312866/2018-6 (F. Vitório). This work also was funded by Public Call 03 Produtividade PROPESQ/PRPG/UFPB proposal code PIA13495-2020 and Paraíba State Research Foundation-Programa Primeiros Projetos Grant 2021/3175 (A. Freitas). This work is a part of the Ph.D. thesis of the third author.

\bibliographystyle{amsplain}

\end{document}